\newcommand{\norm}{{\mathcal{N}}}
\newcommand{\calO}{{\mathcal{O}}}
\newcommand{\C}{{\mathbb{C}}}
\newcommand{\F}{{\mathbb{F}}}
\newcommand{\Q}{{\mathbb{Q}}}
\newcommand{\Z}{{\mathbb{Z}}}
\newcommand{\height}{\mathrm{h}}
\newcommand{\gera}{{\mathfrak{a}}}
\newcommand{\gerb}{{\mathfrak{b}}}
\newcommand{\gerp}{{\mathfrak{p}}}
\DeclareMathOperator{\logast}{log^\ast\!}
\DeclareMathOperator{\Li}{Li}
\newcommand{\ph}{\varphi}
    \let\@fnsymbol\@alph
\title{Uniform explicit  Stewart's theorem on prime factors of linear recurrences}
\author{Yuri Bilu\footnote{Supported  by the SPARC Project P445 (India) and ANR project JINVARIANT}, \ Sanoli Gun\footnote{Supported  by the SPARC Project P445 (India)} \ and Haojie Hong\footnote{Supported by the China Scholarship Council grant CSC202008310189}}
\date{Version of \today}
\newtheorem{theorem}{Theorem}[section]
\newtheorem{proposition}[theorem]{Proposition}
\newtheorem{lemma}[theorem]{Lemma}
\newtheorem{corollary}[theorem]{Corollary}
\newtheorem{remark}[theorem]{Remark}
\numberwithin{equation}{section}
\renewcommand*\l@section[2]{%
  \ifnum \c@tocdepth >\z@
    \addpenalty\@secpenalty
    \addvspace{0.2em \@plus\p@}%
    \setlength\@tempdima{1.5em}%
    \begingroup
      \parindent \z@ \rightskip \@pnumwidth
      \parfillskip -\@pnumwidth
      \leavevmode \bfseries
      \advance\leftskip\@tempdima
      \hskip -\leftskip
      #1\nobreak\hfil \nobreak\hb@xt@\@pnumwidth{\hss #2}\par
    \endgroup
  \fi}
\begin{document}

\hfuzz 4.3pt

\maketitle

\begin{flushright}
\textit{To the memory of Andrzej Schinzel}
\bigskip
\end{flushright}

\begin{abstract}
Stewart (2013) proved that the biggest prime divisor of the $n$th term of a Lucas sequence of integers grows quicker than~$n$, answering famous questions of Erd\H os and Schinzel. In this note we obtain a fully explicit and, in a sense, uniform version of Stewart's result.

\end{abstract}

{\footnotesize

\tableofcontents

}

\section{Introduction}
\label{sintro}

For a non-zero algebraic number~$\gamma$, denote by ${\omega(\gamma)}$ the number of distinct primes~$\gerp$ of the field $\Q(\gamma)$ with the property ${\nu_\gerp(\gamma)\ne 0}$. We denote by  $\norm\gamma$ the $\Q$-norm:
${\norm\gamma=\norm_{\Q(\gamma)/\Q}(\gamma)}$. 

The following theorem was proved by Stewart in his seminal article~\cite{St13}.

\begin{theorem}
\label{thstew}
Let~$\gamma$ be a non-zero algebraic number, not a root of unity, satisfying the following: 
\begin{itemize}
\item
either ${\gamma\in \Q}$,

\item or ${[\Q(\gamma):\Q]=2}$ and ${\norm\gamma=\pm1}$. 
\end{itemize}
Then there exists $n_0$, depending only on $\omega(\gamma)$ and the field ${K=\Q(\gamma)}$, with the following property. For every ${n>n_0}$
   there exists a prime~$\gerp$ of~$K$, with the underlying rational prime~$p$, such that  
${\nu_\gerp(\gamma^n-1) \ge 1}$ and  
$$
p\ge n\exp\left(\frac1{104}\frac{\log n}{\log\log n}\right).
$$
\end{theorem}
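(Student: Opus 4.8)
The plan is to reduce to the setting of integer Lucas/Lehmer numbers and then carry out Stewart's strategy with every estimate made explicit. Concretely, I would first reduce and introduce the cyclotomic factorisation. If $\gamma\in\Q$, write $\gamma=a/b$ in lowest terms; since $\gcd(a^n-b^n,b)=1$, the primes $\gerp=(p)$ with $\nu_\gerp(\gamma^n-1)\ge1$ are exactly the prime divisors of $a^n-b^n$, so the assertion becomes $P(a^n-b^n)\ge n\exp(L/104)$ (with $P$ the greatest prime factor and $L:=\log n/\log\log n$). If $[K:\Q]=2$ and $\norm\gamma=\pm1$, then $\gamma$ is a quadratic unit with conjugate $\bar\gamma=\pm\gamma^{-1}$, $|\gamma|\ge(1+\sqrt5)/2$, and $\norm_{K/\Q}(\gamma^n-1)=(\gamma^n-1)(\bar\gamma^n-1)$ is, up to sign and a bounded factor, a Lehmer-type number whose rational prime divisors are the underlying primes of the $\gerp$ we seek. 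In both cases put $\Phi_n(\gamma)$ for the value at $\gamma$ of the $n$th cyclotomic polynomial, so that $\gamma^n-1=\prod_{d\mid n}\Phi_d(\gamma)$, and I would use the classical facts: (i)~if $\gerp\mid\Phi_n(\gamma)$ and $\gerp\nmid n$ then $\gamma$ has multiplicative order exactly $n$ modulo $\gerp$, whence $n\mid\norm\gerp-1$ and the underlying prime satisfies $p\ge n+1$ (with $p\equiv1\pmod n$ when $\gerp$ has residue degree~$1$), and moreover $\nu_\gerp(\gamma^n-1)=\nu_\gerp(\gamma^{\norm\gerp-1}-1)$; (ii)~the $\gerp$ above primes dividing $n$ contribute only $O((\omega(\gamma)+\omega(n))\log n)$ to $\log|\norm\Phi_n(\gamma)|$, which is essentially the only place $\omega(\gamma)$ intervenes (in the quadratic case $\gamma$ is a unit and $\omega(\gamma)=0$), and the uniformity of $n_0$ just requires $n$ large enough that this quantity is $o(\varphi(n))$.

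Next, assuming for contradiction that for some $n>n_0$ every such $\gerp$ has underlying prime $p<y:=n\exp(L/104)$, I would bring in two estimates. On the archimedean side, from $|\sigma(\gamma)^n-1|=|\sigma(\gamma)|^n(1+o(1))$ over the embeddings together with M\"obius inversion --- and Matveev's linear forms bound if one wants a completely explicit error term --- one gets $\log|\norm_{K/\Q}(\Phi_n(\gamma))|=(1+o(1))\varphi(n)\log M(\gamma)$, where $M(\gamma)$ exceeds an absolute constant and $\height(\gamma)\asymp\log M(\gamma)$. On the non-archimedean side, Yu's explicit $p$-adic lower bound for linear forms in logarithms, applied to $\gamma^n-1$, gives $\nu_\gerp(\gamma^n-1)\ll \frac{p}{(\log p)^2}\,\height(\gamma)\log n$ with fully explicit constants, complemented by the trivial bound $\nu_\gerp(\gamma^n-1)\log\norm\gerp\le\log|\norm(\gamma^n-1)|\ll n\log M(\gamma)$, which is the sharper one once $p$ is large.

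Then I would assemble the contradiction. The primitive prime divisors of $\Phi_n(\gamma)$ lie in $O(2^{\omega(n)})$ residue classes modulo~$n$ inside $(n,y]$, so explicit Brun--Titchmarsh bounds their number by $\ll \frac{2^{\omega(n)}y}{\varphi(n)\log(y/n)}$; since $2^{\omega(n)}=n^{o(1)}$, $y=n^{1+o(1)}$ and $\log(y/n)=L/104\to\infty$, this forces $\log\operatorname{rad}(\Phi_n(\gamma))=o(\varphi(n))$, so by the archimedean estimate $\Phi_n(\gamma)$ is very far from squarefree: its squarefull defect $\Delta:=\sum_\gerp(\nu_\gerp(\Phi_n(\gamma))-1)\log\norm\gerp$ satisfies $\Delta\ge(1-o(1))\varphi(n)\log M(\gamma)$. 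Conversely, decomposing $(n,y]$ dyadically, on each block $[P,2P]$ there are $\ll \frac{2^{\omega(n)}P}{\varphi(n)\log(P/n)}$ primitive primes, each contributing to $\Delta$ at most $\min(\text{Yu's bound},\,n\log M(\gamma)/\log P)$; summing these estimates over the blocks and using $\height(\gamma)\asymp\log M(\gamma)$ so that this common scale cancels, one would obtain an upper bound for $\Delta$ contradicting the lower bound precisely when the constant in $y$ equals $1/104$, for all $n$ exceeding an $n_0$ depending only on $\omega(\gamma)$, on $K$, and on the numerical constants of Yu's and Matveev's theorems --- with the Bilu--Hanrot--Voutier theorem invoked for the uniform existence of primitive divisors of $\Phi_n(\gamma)$.

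The hard part is exactly the balancing in this last step: a crude combination of Yu's bound with the Brun--Titchmarsh count only reaches $y\approx n\cdot(\log n)^{O(1)}$, and the exponential gain $\exp(L/104)$ requires genuinely exploiting the tension between ``a prime carrying a large $p$-adic valuation of $\gamma^n-1$ must itself be small'' and ``there are very few primitive primes below $y$'', through the dyadic decomposition and a careful optimisation. Keeping every constant explicit through Yu's theorem --- whose numerical constants are enormous --- while still landing on the exponent $1/104$ is the principal technical burden of the argument.
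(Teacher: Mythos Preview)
Your outline captures the architecture of the argument (cyclotomic factorisation, lower bound for $\log|\norm\Phi_n(\gamma)|$ of size $\ph(n)\height(\gamma)$, upper bound via $p$-adic valuations summed over primitive primes), but it misses the one idea that actually produces the factor $\exp\bigl(\tfrac{1}{104}\,\tfrac{\log n}{\log\log n}\bigr)$.

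You apply Yu's theorem in its one-variable form to $\gamma^n-1$, obtaining $\nu_\gerp(\gamma^n-1)\ll \dfrac{p}{(\log p)^2}\,\height(\gamma)\log n$, and then hope to manufacture the exponential gain by a dyadic decomposition of $(n,y]$ combined with Brun--Titchmarsh. That cannot work: the saving of $(\log p)^2$ in the valuation bound is only polynomial in $\log p$, and no optimisation of the counting step turns a polynomial saving into an exponential one. You yourself note that a ``crude combination'' only reaches $y\approx n(\log n)^{O(1)}$; the dyadic decomposition is still a crude combination in this sense.

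In Stewart's proof (and in the paper's Theorems~1.4 and~1.5, which are the heart of the matter) the exponential gain is obtained on the $p$-adic side, \emph{before} any summation over primes. One does not apply Yu's bound to $\gamma^n-1$ alone; one writes $\gamma^n-1=\alpha_1^n\cdots\alpha_k^n-1$ with $k-1$ auxiliary multiplicatively independent elements adjoined (the first $k$ rational primes $\le x$ in the rational case; norm-one elements built from split primes in the quadratic case, using Lemmas~4.1 and~6.1 to insert~$\gamma$ among them). Yu's $k$-variable bound then carries the factor $\Omega\asymp \norm\gerp\,(k/\log\norm\gerp)^k$. Choosing $x$ proportional to $\log\norm\gerp$, so that $k\asymp \log\norm\gerp/\log\log\norm\gerp$, makes this factor behave like $\norm\gerp\cdot c^{-k}$ for some $c>1$, i.e.\ like $\norm\gerp\exp\bigl(-c'\,\tfrac{\log\norm\gerp}{\log\log\norm\gerp}\bigr)$. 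That is the entire source of the exponential; once the valuation bound
\[
\nu_\gerp(\gamma^n-1)\le \norm\gerp\exp\Bigl(-c'\,\frac{\log\norm\gerp}{\log\log\norm\gerp}\Bigr)\height(\gamma)\logast n
\]
is in hand, the deduction of the theorem is a short comparison of the upper and lower bounds for $\Sigma_{\text{p}}$, with the primitive primes counted trivially by $\pi(P;n,\pm1)\le 2P/n$ --- Brun--Titchmarsh, dyadic decompositions, and the Bilu--Hanrot--Voutier theorem are all unnecessary.
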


This result answered famous questions posed by Erd\H os and Schinzel, see the introduction of~\cite{St13} for a historical account.

Note that Stewart's \cite[Theorem~1.1]{St13} is stated   in different terms, but what he actually proves is exactly Theorem~\ref{thstew} above. 

In this note we re-examine Stewart's argument with the following objectives:

\begin{description}
\item[(uniformity)]
we show that Stewart's $n_0$ depends only on~$\Q(\gamma)$, but not on $\omega(\gamma)$; 
in particular, if ${\gamma \in \Q }$ then~$n_0$ is an absolute constant; 

\item[(explicitness)]
we obtain a totally explicit expression for~$n_0$. 
\end{description}

We prove the following two theorems. 

\begin{theorem}
\label{thstrat}
Let~$\gamma$ be a non-zero rational  number, distinct from $\pm1$. 
Set ${n_0=\exp(10^6)}$. Then for every ${n\ge n_0}$  there is a prime number~$p$  such that 
${\nu_p(\gamma^n-1) \ge 1}$ and  
${p\ge n\exp\left(0.0005\frac{\log n}{\log\log n}\right)}$.   
\end{theorem}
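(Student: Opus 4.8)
The plan is to follow Stewart's strategy, specialized to the rational case, and to track every constant explicitly. Fix a rational ${\gamma = a/b}$ in lowest terms, ${\gamma \ne \pm 1}$, and fix ${n \ge n_0 = \exp(10^6)}$. Write ${\Phi_n(X)}$ for the $n$th cyclotomic polynomial and consider the integer ${\Phi_n(a,b) = b^{\varphi(n)}\Phi_n(a/b)}$, the ``$n$th cyclotomic value'' of~$\gamma$. The key point is a dichotomy: either some prime~$p$ dividing ${\Phi_n(a,b)}$ is already large enough — meaning ${p \ge n\exp(0.0005\,\tfrac{\log n}{\log\log n})}$, in which case, since ${p \mid \Phi_n(a,b) \mid \gamma^n - 1}$ (up to the harmless primitive-part/intrinsic-prime bookkeeping), we are done — or every prime factor of ${\Phi_n(a,b)}$ is small, and then I derive a contradiction from a lower bound on the size of ${|\Phi_n(a,b)|}$ that is incompatible with its having only small prime factors of bounded multiplicity.

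Concretely, the argument runs as follows. First, a lower bound for ${\log|\Phi_n(a,b)|}$: using ${\log|\Phi_n(\gamma)| = \sum_{d\mid n}\mu(n/d)\log|\gamma^d - 1|}$ together with elementary estimates on ${\log|\gamma^d-1|}$ (these are quantitatively clean because ${\gamma}$ is rational and ${|\gamma| \ne 1}$, or one uses a Baker-type linear forms in logarithms estimate if ${|\gamma|}$ is extremely close to~$1$), one gets ${\log|\Phi_n(a,b)| \gg \varphi(n)\log\max(|a|,|b|)}$ with an explicit implied constant, and in any case ${\log|\Phi_n(a,b)| \ge \varphi(n) - O(\text{small})}$. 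Second, suppose every prime~$p$ dividing ${\Phi_n(a,b)}$ satisfies ${p < n\exp(0.0005\,\tfrac{\log n}{\log\log n})}$. One knows that each such~$p$ either divides~$n$, or satisfies ${p \equiv 1 \pmod{n}$ when it divides the primitive part; the ``intrinsic'' primes dividing~$n$ contribute at most ${n^{o(1)}}$ to ${\Phi_n(a,b)}$, quantitatively ${\log n \cdot \log\log(\text{stuff})}$ bounded. For the remaining primes, ${\nu_p(\Phi_n(a,b))}$ is bounded (essentially by~$1$, or by a controlled quantity), so ${\log|\Phi_n(a,b)| \le \sum_{p} \nu_p(\Phi_n(a,b))\log p \le \omega(\Phi_n(a,b))\cdot\big(\log n + 0.0005\tfrac{\log n}{\log\log n}\big)}$. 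The number of primes ${p \equiv 1 \pmod n$ up to~$x$ is ${\ll \tfrac{x}{\varphi(n)\log(x/n)}}$ by Brun--Titchmarsh (explicit version, e.g.\ Montgomery--Vaughan), so ${\omega(\Phi_n(a,b)) \ll \tfrac{x}{\varphi(n)\log n}}$ with ${x = n\exp(0.0005\tfrac{\log n}{\log\log n})}$. Combining the two bounds yields
\[
\varphi(n)\log\max(|a|,|b|) \;\ll\; \frac{n\exp(0.0005\tfrac{\log n}{\log\log n})}{\varphi(n)\log n}\cdot\log n \;=\; \frac{n\exp(0.0005\tfrac{\log n}{\log\log n})}{\varphi(n)},
\]
i.e.\ ${\varphi(n)^2 \ll n\exp(0.0005\tfrac{\log n}{\log\log n})}$, which since ${\varphi(n) \gg n/\log\log n}$ forces ${n/(\log\log n)^2 \ll \exp(0.0005\tfrac{\log n}{\log\log n})}$ — false for ${n \ge \exp(10^6)}$ because ${\log(n/(\log\log n)^2) \sim \log n \gg 0.0005\tfrac{\log n}{\log\log n}}$. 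This contradiction completes the proof.

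The main obstacle is purely quantitative bookkeeping rather than conceptual: I must make the ``${\ll}$'' symbols above into honest inequalities with the stated constant ${0.0005}$ and the stated threshold ${\exp(10^6)}$, which requires (i) an explicit Brun--Titchmarsh inequality valid for all moduli and all ${x}$ in the relevant range, (ii) explicit lower bounds for ${\varphi(n)}$ and for ${\log|\Phi_n(a,b)|}$ that are uniform in~$\gamma$ — this is where the case ${|a/b|}$ near~$1$ needs a clean explicit linear-forms-in-logarithms estimate, the analogue of what Stewart uses, to guarantee ${\log|\gamma^n-1|}$ is not pathologically small — and (iii) bounding the intrinsic-prime and higher-multiplicity contributions to ${\Phi_n(a,b)}$ explicitly. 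The constant ${0.0005}$ (much smaller than Stewart's ${1/104}$) and the large threshold ${\exp(10^6)}$ reflect the slack one deliberately leaves to keep all these estimates simultaneously manageable; since the statement does not ask for an optimal constant, the proof can afford to be wasteful at each step.
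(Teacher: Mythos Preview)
Your overall architecture is right --- compare a lower bound for $\height(\Phi_n(\gamma))$ (or $\log|\Phi_n(a,b)|$) against an upper bound coming from its prime factorisation --- and matches the paper's Section~\ref{sthrat}. But there is a genuine gap at the crucial step: you assert that for the primitive primes ``$\nu_p(\Phi_n(a,b))$ is bounded (essentially by~$1$, or by a controlled quantity)'', and then your upper bound reads $\sum_p \nu_p\log p \lesssim \omega(\Phi_n(a,b))\cdot \log X$. No such bound on $\nu_p$ is available a~priori. A primitive prime $p\equiv 1\pmod n$ can divide $\Phi_n(a,b)$ to arbitrarily high power; controlling that multiplicity is exactly the deep input of the argument, and it is supplied by Theorem~\ref{thordrat} (ultimately Yu's $p$-adic linear forms, Theorem~\ref{thyu}). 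The bound one actually gets is not $O(1)$ but only
\[
\nu_p(\gamma^n-1)\;\le\; p\exp\!\left(-0.002\,\frac{\log p}{\log\log p}\right)\height(\gamma)\log n,
\]
which is nearly as large as $p$ itself. The tiny saving $\exp(-0.002\log p/\log\log p)$ is precisely what produces the factor $\exp(0.0005\log n/\log\log n)$ in the conclusion; without it you cannot beat the trivial $P\ge n+1$.

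Two further remarks. First, you invoke Baker-type estimates only on the archimedean side (to handle $|\gamma|$ near~$1$); the decisive use is the \emph{$p$-adic} one, which you never mention. Second, once the correct bound on $\nu_p$ is in place, Brun--Titchmarsh is unnecessary: the paper notes that the trivial estimate $\pi(P;n,1)\le P/n$ already suffices, because the saving has been moved into the $\nu_p$ bound rather than into the count of primes. Your final contradiction $\varphi(n)^2\ll n\exp(0.0005\log n/\log\log n)$ would work beautifully if $\nu_p=O(1)$ held, but that hypothesis is doing all the work and is unproved.
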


\begin{theorem}
\label{thstquad}
Let~$\gamma$ be a non-zero algebraic number of degree~$2$, not a root of unity. We denote~$D_K$  the discriminant of the number field  ${K=\Q(\gamma)}$, and we set
$
{n_0=\exp\exp(\max\{10^9,3|D_K|\})}
$. 
Assume that ${\norm\gamma=\pm1}$. Then for every ${n\ge n_0}$   there exists a prime~$\gerp$ of~$K$, with the underlying rational prime~$p$, such that  
${\nu_\gerp(\gamma^n-1) \ge 1}$ and  
$$
p\ge n\exp\left(0.0002\frac{\log n}{\log\log n}\right).
$$
\end{theorem}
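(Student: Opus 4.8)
The plan is to follow Stewart's strategy, specialized and made explicit for the quadratic case. Write $\gamma$ as $\alpha/\beta$ with $\alpha,\beta$ algebraic integers in $K$, or — since $\norm\gamma=\pm1$ — simply take $\gamma$ to be a unit (or a ratio of conjugate primes if $\gamma$ is not an algebraic integer; the norm condition forces the relevant structure). The key object is $\Phi_n(\gamma)$, where $\Phi_n$ is the $n$th cyclotomic polynomial, so that $\gamma^n-1=\prod_{d\mid n}\Phi_d(\gamma)$. A prime $\gerp$ dividing $\Phi_n(\gamma)$ with $\gerp\nmid n$ is a ``primitive'' prime: its underlying rational prime $p$ satisfies $p\equiv\pm1\pmod n$ roughly, hence already $p\ge n-1$; the whole point is to gain the extra factor $\exp\!\bigl(c\log n/\log\log n\bigr)$.

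First I would set up the counting framework: let $P$ be the largest rational prime below the target bound $n\exp(0.0002\,\tfrac{\log n}{\log\log n})$ underlying a primitive prime of $\Phi_n(\gamma)$, argue for contradiction that no prime exceeds this, and bound $|\norm_{K/\Q}\Phi_n(\gamma)|$ from below using a lower bound for $|\Phi_n(\gamma)|$ and its conjugate — here one uses that $\gamma$ is not a root of unity together with a quantitative lower bound for linear forms in two logarithms (Baker's method, in an explicit form such as Laurent's or Matveev's) to control $|\gamma^n-1|$ from below; this is where the shape $\log n/\log\log n$ enters. Then I would bound the same norm from above: each primitive prime power $\gerp^{\nu_\gerp(\Phi_n(\gamma))}$ contributes, and $\nu_\gerp(\Phi_n(\gamma))$ is controlled (essentially $\nu_\gerp(\gamma^n-1)$ minus a bounded defect coming from $\gerp\mid n$), so the total is at most $P$ raised to a power governed by $\omega(\gamma)$ and by how large the exponents can be. Comparing the two bounds yields an inequality that fails once $n$ is large enough in terms of $D_K$ and $\omega(\gamma)$.

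The crucial extra step — the one that removes the dependence on $\omega(\gamma)$, as promised in the introduction — is to bound $\omega(\gamma)$ itself in terms of $n$ and $D_K$. Since every prime $\gerp$ with $\nu_\gerp(\gamma)\ne0$ lies over a rational prime dividing $\norm\alpha$ or $\norm\beta$, and these norms are bounded in terms of the height of $\gamma$, one gets $\omega(\gamma)\ll \log H(\gamma)/\log\log H(\gamma)$; but then one splits into cases according to whether $H(\gamma)$ is small or large relative to $n$. For small height the trivial lower bound $|\gamma^n-1|\gg 1$ already suffices; for large height the main term dominates and the $\omega(\gamma)$-dependence is absorbed. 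Assembling these pieces, replacing every ``$\ll$'' by an explicit constant, and optimizing the numerical parameters (the exponent $0.0002$ and the threshold $\exp\exp(\max\{10^9,3|D_K|\})$) finishes the argument.

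**Main obstacle.** The hard part will be the bookkeeping that makes everything uniform and explicit simultaneously: one must carry an explicit linear-forms-in-logarithms estimate through the norm computation, track the $\gerp\mid n$ defect in $\nu_\gerp(\Phi_n(\gamma))$ without losing more than a constant, and handle the height dichotomy so that the final threshold depends on $D_K$ alone (not on $\gamma$ or $\omega(\gamma)$). Getting the numerical constants to close — so that the comparison of the two norm bounds genuinely fails for all $n\ge\exp\exp(\max\{10^9,3|D_K|\})$ — is a delicate optimization rather than a conceptual difficulty, but it is where most of the work lies.
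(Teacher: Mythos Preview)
Your framework (work with $\Phi_n(\gamma)$, split into primitive and non-primitive primes, compare an upper and a lower bound) is correct, but the proposal has a genuine gap at the crucial step.

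\textbf{The missing ingredient is a $p$-adic linear-forms bound.} You say that for the upper bound ``$\nu_\gerp(\Phi_n(\gamma))$ is controlled'' and the total is at most $P$ to some power, but you never say how $\nu_\gerp(\gamma^n-1)$ is bounded. There is no elementary bound here: for a primitive $\gerp$ the order of $\gamma$ in $\F_\gerp^\times$ is exactly~$n$, and lifting-the-exponent gives nothing. In the paper this is the whole content of Theorem~\ref{thordquad}, proved via Yu's $p$-adic theorem: one shows
\[
\nu_\gerp(\gamma^n-1)\le p\exp\!\Bigl(-0.001\,\frac{\log p}{\log\log p}\Bigr)\height(\gamma)\logast n,
\]
and it is precisely this $\exp(-c\log p/\log\log p)$ saving, summed over the $O(P/n)$ primes $p\equiv\pm1\pmod n$ with $p\le P$, that produces the final gain $\exp\bigl(c'\frac{\log n}{\log\log n}\bigr)$. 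Your claim that ``this is where the shape $\log n/\log\log n$ enters'' (referring to the archimedean Baker/Laurent/Matveev bound) is incorrect: the archimedean lower bound for $|\Phi_n(\gamma)|$ only contributes a term of size $2^{\omega(n)}\log n$, which is absorbed; the genuine gain comes from the $p$-adic side.

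\textbf{The $\omega(\gamma)$-independence is achieved differently.} In Stewart's original argument the $\omega(\gamma)$-dependence does not sit in the upper bound for the norm as you suggest; it enters through the setup of Yu's theorem, because Stewart builds his auxiliary multiplicatively independent system using primes chosen to avoid those dividing~$\gamma$. The paper removes this dependence not by a height dichotomy, but by Lemmas~\ref{ltriv} and~\ref{lkumm}: one first constructs $k$ multiplicatively independent norm-one elements $\theta_{\ell_i}$ from split primes (Section~\ref{smult}), and then replaces one $\theta_{\ell_i}$ by (a root of) $\gamma$ while preserving the Kummer condition $[K(\sqrt{\alpha_1},\ldots,\sqrt{\alpha_k}):K]=2^k$. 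This is what makes the $p$-adic bound uniform in~$\gamma$. Your proposed bound $\omega(\gamma)\ll \log H(\gamma)/\log\log H(\gamma)$ followed by a small-height/large-height split does not interface with the actual mechanism and would not close the argument.
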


Our numerical constants $0.0005$  and $0.0002$ are worse than Stewart's $1/104$. On the other hand, our~$n_0$ do not depend on $\omega(\gamma)$. Our argument, being very close to Stewart's, allows one, in principle, to obtain $1/104$ (but, probably, not $1/102$), for the price of  increasing the numerical value of~$n_0$.

We deduce Theorems~\ref{thstrat} and~\ref{thstquad} from the following two theorems (again, essentially, due to Stewart, see \cite[Section~4]{St13}), which are of independent interest. We denote by $\height(\cdot)$ the absolute logarithmic height, see Section~\ref{sprelim}. We also denote ${\logast =\max\{\log ,1\}}$, and we denote by $\norm\gerp$  the absolute norm of the ideal~$\gerp$; that is, ${\norm\gerp=\#\calO_K/\gerp}$.

\begin{theorem}
\label{thordrat}
Let~$\gamma$ be a non-zero algebraic number of degree~$d$, not a root of unity.  
Set 
$
{p_0=\exp(80000 d (\logast d)^2)}
$.
Then for every prime~$\gerp$ of the field ${K=\Q(\gamma)}$ with ${\norm\gerp\ge p_0}$, and every positive integer~$n$   we have 
$$
\nu_\gerp(\gamma^n-1) \le \norm\gerp\exp\left(-0.002d^{-1}\frac{\log \norm\gerp}{\log\log \norm\gerp}\right)\height(\gamma)\logast n. 
$$
\end{theorem}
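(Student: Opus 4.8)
The plan is to bound $\nu_\gerp(\gamma^n-1)$ by producing a small multiple of~$\gamma^n-1$ that is nevertheless $\gerp$-adically divisible to a high power, and then appealing to a lower bound for linear forms in $\gerp$-adic logarithms to show this cannot happen unless $\nu_\gerp(\gamma^n-1)$ is already controlled. Concretely, let $\gerp$ lie over the rational prime~$p$, put $f=\nu_\gerp$, and let $m=m_\gerp(\gamma)$ be the multiplicative order of~$\gamma$ modulo~$\gerp$ (the smallest positive~$m$ with $\nu_\gerp(\gamma^m-1)\ge 1$); if no such~$m$ exists, or $m\nmid n$, then $\nu_\gerp(\gamma^n-1)=0$ and there is nothing to prove. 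So assume $m\mid n$. The standard "lifting the exponent" reduction gives $\nu_\gerp(\gamma^n-1)=\nu_\gerp(\gamma^m-1)+\nu_\gerp(n/m)$ once $p$ is large compared to the ramification (automatic here since $\norm\gerp\ge p_0$ forces $p$ huge, in particular $p$ unramified and $p>e$), so it suffices to bound $\nu_\gerp(\gamma^m-1)$; the term $\nu_\gerp(n/m)\le \log n/\log p$ is absorbed into the $\logast n$ on the right-hand side with room to spare.

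Next I would bound $N:=\nu_\gerp(\gamma^m-1)$ via a $\gerp$-adic linear forms in logarithms estimate à la Yu. The quantity $\gamma^m-1$ has large $\gerp$-adic valuation~$N$, so $\Lambda:=\gamma^m-1$ (or, passing to logarithms, $m\log_\gerp\gamma$) is a nonzero "linear form in two logarithms" — the logarithms of~$\gamma$ and of~$1$, effectively a one-variable form $m\log_\gerp\gamma$ — whose $\gerp$-adic absolute value is $\norm\gerp^{-N}$. Yu's theorem then yields a lower bound of the shape
$$
\nu_\gerp(\gamma^m-1)\le C(d)\,\frac{p}{(\log p)^{?}}\,\height(\gamma)\,\log m,
$$
with an explicit $C(d)$ polynomial (times log factors) in~$d$ and an explicit power of $\log p$ in the denominator coming from the $\gerp$-adic height of~$\gamma$ and the degree of the extension. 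The precise statement to invoke is Yu's $p$-adic estimate (the version giving the factor $p/\log p$ rather than $p$, crucial for the $\log\log$ gain); here $\log m\le \log n$ since $m\mid n$, $m\le n$. Combining with $\norm\gerp=p^{f_\gerp}\ge p$ and the hypothesis $\norm\gerp\ge p_0=\exp(80000\,d(\logast d)^2)$, which makes $\log\log\norm\gerp$ and $\log\norm\gerp$ both large and keeps the error terms in Yu's bound negligible, we arrive at
$$
\nu_\gerp(\gamma^n-1)\le \norm\gerp\exp\!\left(-0.002 d^{-1}\frac{\log\norm\gerp}{\log\log\norm\gerp}\right)\height(\gamma)\logast n,
$$
after checking the numerical inequality that turns Yu's $C(d)\,p/(\log p)^{1+o(1)}$ into $\norm\gerp\exp(-0.002 d^{-1}\log\norm\gerp/\log\log\norm\gerp)$; the threshold $p_0$ is exactly what is needed for this last comparison to hold.

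The main obstacle is getting the numerology in Yu's bound to line up with the claimed constants. One has to quote a fully explicit version of the $\gerp$-adic subspace/linear-forms estimate (Yu 2007, or Bugeaud–Laurent in the two-variable case), track every constant, and verify that for $\norm\gerp\ge p_0$ the "main term" of Yu's bound is at most $\norm\gerp(\log\norm\gerp)^{-1}$ times something like $\exp\bigl(0.002 d^{-1}\frac{\log\norm\gerp}{\log\log\norm\gerp}\bigr)^{-1}$ — i.e.\ that the crude polynomial-in-$d$ constant $C(d)$, together with leftover log powers, is dominated by the extremely slowly growing factor $\exp(0.002 d^{-1}\log\norm\gerp/\log\log\norm\gerp)$. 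This is a pure (if delicate) calculus exercise once $\norm\gerp\ge p_0$, since $\log p_0$ is already of order $d(\log d)^2$ and $\log\log\norm\gerp$ grows, so the exponential factor eventually beats any fixed power of $d$ times any fixed power of $\log\norm\gerp$. Handling the small-valuation and $m\nmid n$ cases, and the ramification contribution $\nu_\gerp(n/m)$, is routine by comparison; I would dispatch those first and devote the bulk of the argument to the explicit bookkeeping in Yu's estimate.
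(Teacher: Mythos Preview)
Your approach has a genuine gap: applying Yu's bound to a linear form in one or two $\gerp$-adic logarithms cannot produce the factor $\exp\bigl(-0.002d^{-1}\frac{\log\norm\gerp}{\log\log\norm\gerp}\bigr)$. For a fixed number~$k$ of logarithms, Yu's estimate gives at best
\[
\nu_\gerp(\gamma^n-1)\ \le\ C(d)\,\frac{\norm\gerp}{(\log\norm\gerp)^{c}}\,\height(\gamma)\,\logast n
\]
with~$c$ a \emph{fixed} small constant (essentially $c\le k$). To dominate this by $\norm\gerp\exp\bigl(-0.002d^{-1}\frac{\log\norm\gerp}{\log\log\norm\gerp}\bigr)$ you would need $(\log\norm\gerp)^{c}\ge \exp\bigl(0.002d^{-1}\frac{\log\norm\gerp}{\log\log\norm\gerp}\bigr)$, i.e.\ $c\gtrsim \frac{\log\norm\gerp}{(\log\log\norm\gerp)^2}$, which is unbounded in~$\norm\gerp$. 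No amount of bookkeeping with constants, and no choice of $p_0$, can close this; the ``calculus exercise'' you describe simply fails.

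What the paper actually does (following Stewart) is the step you are missing: one introduces \emph{many} auxiliary logarithms. Take the first $k=\pi(x)$ rational primes $\ell_1,\ldots,\ell_k$, replace one of them by~$\gamma$ (Lemma~\ref{ltriv}), and apply Yu's theorem to $\alpha_1=\gamma/(\ell_2\cdots\ell_k),\ \alpha_2=\ell_2,\ldots,\alpha_k=\ell_k$ with all exponents equal to~$n$. The crucial feature of Yu's bound is the factor $\Omega=\norm\gerp\,(k/\log\norm\gerp)^k$; choosing $x=(400d)^{-1}\log\norm\gerp$, so that $k\asymp d^{-1}\log\norm\gerp/\log\log\norm\gerp$, turns $(k/\log\norm\gerp)^k$ into something like $0.4^{k}$, and it is precisely this that yields $\exp\bigl(-0.002d^{-1}\frac{\log\norm\gerp}{\log\log\norm\gerp}\bigr)$. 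The lifting-the-exponent reduction you outline is not needed and does not help; the whole content of the theorem lies in letting the number of logarithms grow with~$\norm\gerp$.
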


\begin{theorem}
\label{thordquad}
Let~$\gamma$ be as in Theorem~\ref{thstquad}; that is, a non-zero algebraic number of degree~$2$ and norm $\pm1$, but not a root of unity.  We again denote~$D_K$ the discriminant of the field ${K=\Q(\gamma)}$, and we set 
$
p_0=\exp\exp(\max\{10^8,2|D_K|\}). 
$
Then for every prime~$\gerp$ of~$K$ with underlying rational prime ${p\ge p_0}$,  and every positive integer~$n$   we have 
\begin{equation}
\label{eordquad}
\nu_\gerp(\gamma^n-1) \le p\exp\left(-0.001\frac{\log p}{\log\log p}\right)\height(\gamma)\logast n.
\end{equation}

\end{theorem}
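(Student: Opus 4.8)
The plan is to follow Stewart's Section~4, reducing \eqref{eordquad} to a bound on $\nu_\gerp(\gamma^{t}-1)$ where $t$ is the multiplicative order of $\gamma$ modulo~$\gerp$, and then invoking $p$-adic linear forms in logarithms.

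First I would dispose of the case in which $p$ splits or ramifies in~$K$: then $\norm\gerp=p$, and since $p\ge p_{0}$ exceeds the threshold $\exp(80000\cdot 2(\log 2)^{2})$ of Theorem~\ref{thordrat}, that theorem applied with $d=2$ gives directly $\nu_\gerp(\gamma^{n}-1)\le\norm\gerp\exp(-0.001\,\tfrac{\log\norm\gerp}{\log\log\norm\gerp})\height(\gamma)\logast n=p\exp(-0.001\,\tfrac{\log p}{\log\log p})\height(\gamma)\logast n$, which is \eqref{eordquad}. So from now on $\gerp$ is inert, $\gerp=(p)$ and $\norm\gerp=p^{2}$; note also that $p$ is odd and, since $p\ge\exp(2|D_{K}|)$ implies $p\nmid D_{K}$, unramified. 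If $\gerp\nmid\gamma^{n}-1$ there is nothing to prove, so assume $\gerp\mid\gamma^{n}-1$ and set $t=\mathrm{ord}_\gerp(\gamma)$; then $t\mid n$. Because $\gerp$ is inert, the Frobenius at~$\gerp$ is the non-trivial automorphism~$\sigma$ of $K/\Q$, so $\gamma^{p}\equiv\sigma(\gamma)\pmod\gerp$ and hence $\gamma^{p+1}\equiv\gamma\,\sigma(\gamma)=\norm\gamma=\pm1\pmod\gerp$; therefore $t\mid 2(p+1)$, and in particular $t\le 2(p+1)$. This is the decisive point, and it is what lets one work with the rational prime~$p$ in place of $\norm\gerp=p^{2}$. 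By lifting the exponent (legitimate since $p$ is odd and $\gerp/p$ is unramified) we get $\nu_\gerp(\gamma^{n}-1)=\mu+\nu_{p}(n/t)$ with $\mu:=\nu_\gerp(\gamma^{t}-1)$ and $\nu_{p}(n/t)\le\log n/\log p\le\logast n$, so it remains to prove $\mu\le\tfrac12\,p\exp(-0.001\,\tfrac{\log p}{\log\log p})\height(\gamma)\logast n$.

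The heart of the proof is the bound on~$\mu$. When $t$ is small, say $t\le p\exp(-0.001\,\tfrac{\log p}{\log\log p})$, the trivial size estimate for $\nu_{p}(\norm_{K/\Q}(\gamma^{t}-1))$, which is $\ll t\,\height(\gamma)/\log p$, already suffices; note that the hard case forces $n$ large, so $\logast n$ is then of size $\asymp\log p$. For $t$ in the remaining range one invokes $p$-adic linear forms in logarithms, following Stewart. Here the hypotheses $\norm\gamma=\pm1$ and ``$\gamma$ not a root of unity'' are used to exclude any relation $\gamma^{b_{0}}q_{1}^{b_{1}}\cdots q_{k}^{b_{k}}=1$ with distinct rational primes $q_{i}$: if $b_{0}=0$ this contradicts unique factorisation, while if $b_{0}\ne 0$ then $\gamma^{b_{0}}\in\Q$ and comparing norms forces $(\gamma^{b_{0}})^{2}=\norm(\gamma^{b_{0}})=(\norm\gamma)^{b_{0}}=\pm1$, so $\gamma$ would be a root of unity. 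One then picks the first $k$ primes $q_{1}<\cdots<q_{k}$ distinct from~$p$, with $k$ of order $\log p/\log\log p$, constructs (by a pigeonhole argument exploiting $\gamma^{t}\equiv 1\pmod{\gerp^{\mu}}$ together with $t\le 2(p+1)$) a non-trivial auxiliary form $\Lambda=\gamma^{b_{0}}q_{1}^{b_{1}}\cdots q_{k}^{b_{k}}-1$ with $\nu_\gerp(\Lambda)\ge\mu$, $|b_{0}|<t$ and the remaining $b_{i}$ controlled, and bounds $\nu_\gerp(\Lambda)$ by Yu's estimate for linear forms in $k+1$ logarithms. Optimising over~$k$ — balancing the powers of $\log p$ in Yu's bound against the number of auxiliary primes — then yields $\mu\ll p\exp(-c\,\tfrac{\log p}{\log\log p})\height(\gamma)\logast n$ for a constant $c$ comfortably larger than $0.001$. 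Feeding this back into the previous paragraph, and using $p\ge\exp\exp(10^{8})$ to absorb both the loss from $c$ to $0.001$ and all lower-order terms, gives \eqref{eordquad}.

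I expect this last step to be the main obstacle: not the existence of the argument, which is Stewart's, but the delicate numerical bookkeeping needed to make the final constant exactly $0.001$ with this value of~$p_{0}$ — a trade-off which, as the authors note, could instead be tuned toward Stewart's $1/104$ at the price of a yet larger~$p_{0}$. A secondary nuisance is the handling of degenerate configurations (small~$t$; the subcase $\norm\gamma=-1$ with $n$ odd, where in fact $\gerp\nmid\gamma^{n}-1$ since $\gamma^{n}\equiv 1$ would force $2\equiv 0\pmod\gerp$; the oddness and unramifiedness of~$p$), all of which are accommodated by the hypothesis $p\ge\exp\exp(\max\{10^{8},2|D_{K}|\})$.
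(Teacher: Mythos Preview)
Your reduction of the split/ramified case to Theorem~\ref{thordrat} with $d=2$ matches the paper, and your observation that the order~$t$ of~$\gamma$ modulo an inert~$\gerp$ divides $2(p+1)$ is correct and is morally why the theorem holds. But the central step --- bounding $\mu=\nu_\gerp(\gamma^t-1)$ via rational auxiliary primes~$q_i$ and a ``pigeonhole argument'' --- has a real gap: a missing factor of~$p$. With $\norm\gerp=p^2$ and $\delta=1$, Theorem~\ref{thyu} gives $\Omega\approx p^{2}(k/(2\log p))^{k}$, and optimising over~$k$ yields only $p^{2}\exp(-c\log p/\log\log p)\height(\gamma)\logast B$, a full factor of~$p$ too large. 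Your pigeonhole does not recover it: the images of the~$q_i$ lie in $\F_p^\times\subset\F_{p^2}^\times$ while $\bar\gamma$ lies in the norm-$\pm1$ subgroup, and together these can generate all of $\F_{p^2}^\times$, so no $\delta$-saving is available; and shrinking $|b_0|$ attacks $\logast B$, not~$\Omega$, which is where the loss sits. (This is also not Stewart's Section~4 argument: Stewart already uses norm-$1$ auxiliary numbers, not rational primes.)

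The paper's remedy is to choose the auxiliary numbers themselves to have norm~$1$. For each split prime $\ell\in S(K)$ it constructs $\theta_\ell\in K^\times$ with $\norm\theta_\ell=1$ and $\height(\theta_\ell)\approx\tfrac12\log\ell$ (Section~\ref{smult}, Proposition~\ref{pthetas}). Then $\bar\gamma,\bar\theta_{\ell_2},\ldots,\bar\theta_{\ell_k}$ all lie in the subgroup $G=\{x\in\F_{p^2}^\times:\norm_{\F_{p^2}/\F_p}x=\pm1\}$, which has index $(p-1)/2$ in $\F_{p^2}^\times$, so one may take $\delta=(p-1)/2$ in Theorem~\ref{thyu} and obtain $\Omega\approx 2p(k/(2\log p))^{k}$ --- exactly the required saving. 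Lemma~\ref{lkumm} secures the $2^k$-degree hypothesis after replacing one $\theta_{\ell_i}$ by a maximal root~$\theta$ of~$\gamma$. Your divisibility $t\mid 2(p+1)$ is precisely the statement $\bar\gamma\in G$, $|G|=2(p+1)$; but the saving has to be realised through~$\delta$ for \emph{all} the~$\alpha_i$ simultaneously, not just for~$\gamma$.
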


\begin{remark}
\begin{enumerate}
\item
The principal tool in the proof of Theorem~\ref{thstrat} is Theorem~\ref{thordrat}, which holds not only for ${\gamma\in \Q}$, but for arbitrary algebraic~$\gamma$. One may wonder whether Theorem~\ref{thstrat} can be extended to this generality. One may expect the following statement: for~$n$ large enough, there exists a prime~$\gerp$ of the number field $\Q(\gamma)$ such that ${\nu_\gerp(\gamma^n-1)\ge 1}$ and 
$
{\norm\gerp \ge n\exp\left(c\frac{\log n}{\log\log n}\right)} 
$,
where~$c$ is a positive number not depending on~$n$. 

Unfortunately, the present argument does not seem to be capable of proving this. See Remark~\ref{recannot} for more details. 

\item
Our values of~$n_0$ and~$p_0$ are rather huge numerically.  In particular, in Theorems~\ref{thstquad} and~\ref{thordquad} our~$n_0$, respectively~$p_0$,  are double exponential in~$|D_K|$. Of course, this is quite unsatisfactory for practical purposes. Unfortunately, not much can be done here without involving substantially new ideas. The reason is that we have to use the numerical Prime Number Theorem from~\cite{BMOR18} (see Proposition~\ref{pcountsplit}). And using this theorem requires parameter~$x$ therein to be exponential in~$|D_K|$. Since in the subsequent proof of Theorem~\ref{thordquad} this~$x$ is set to be around $\log p$, this yields double exponential dependence in $|D_K|$. Note also that the original approach of Stewart leads to even triple exponential dependence, as explained in Section~\ref{smult}.

\end{enumerate} 
\end{remark}

We follow the main lines of Stewart's argument, with two changes. Uniformity in~$\gamma$  is achieved by using Lemmas~\ref{ltriv} and~\ref{lkumm}. Another deviation of Stewart's argument is of more technical nature and is explained in detail in Section~\ref{smult}. 

\paragraph{Plan of the article}
Our principal tool is Yu's~\cite{Yu13} bound for a $p$-adic logarithmic form. In Section~\ref{syu}, 
we present a simplified version of Yu's result adapted for our purposes. In Section~\ref{sordrat}, we prove Theorem~\ref{thordrat}. 

In Section~\ref{squad} and~\ref{smult}, we collect various facts about quadratic fields used in the proof of Theorem~\ref{thordquad}, which is proved afterwards in Section~\ref{sordquad}.  

In Section~\ref{scypr}, we recall basic facts about cyclotomic polynomials and primitive divisors, needed for the proofs of Theorems~\ref{thstrat} and~\ref{thstquad}. These latter are proved in the final Sections~\ref{sthrat} and~\ref{sthquad} respectively.


\section{Notation and preliminaries}
\label{sprelim}
Let~$K$ be a number field. We denote by  $D_K$ and $h_K$ the discriminant and the class number of~$K$. 
By a prime of~$K$ we mean a prime ideal of the ring of integers~$\calO_K$.  We denote by  $\F_\gerp$ the residue field ${\calO_K/\gerp}$, and ${\norm\gerp =\#\F_\gerp}$ the absolute norm of~$\gerp$. 

Let $\gera,\gerb$ be non-zero fractional ideals of~$K$. We call them \textit{involved} if there exists a $K$-prime~$\gerp$ such that 
${\nu_\gerp(\gera),\nu_\gerp(\gerb)\ne 0}$. 
If no such prime exists, then we call $\gera,\gerb$ \textit{disjoint} (so that ``not involved'' and ``disjoint'' are synonyms).  We call ${\alpha,\beta \in K^\times}$ \textit{involved}, resp. \textit{disjoint} if so are the principal ideals $(\alpha),(\beta)$.



We denote by  $\height(\alpha)$ the usual absolute logarithmic height of ${\alpha\in\bar\Q}$:
$$
\height(\alpha) = [K:\Q]^{-1}\sum_{v\in M_K}d_v\log^+|\alpha|_v,
$$
where ${\log^+ =\max\{\log , 0\}}$ and~$d_v$ denotes the local degree. Here~$K$ is an arbitrary number field containing~$\alpha$, and the places ${v\in M_K}$ are normalized to extend the standard places of~$\Q$; that is, ${|p|_v=p^{-1}}$ if ${v\mid p <\infty}$ and ${|x|_v=|x|}$ if ${v\mid\infty}$ and ${x\in \Q}$.

If~$K$ is a number field of degree~$d$ and ${\alpha\in K}$ then the following formula is an immediate consequence of the definition of the height: 
$$
\height(\alpha) =\frac{1}{d}\left(\sum_{\sigma:K\hookrightarrow\C}\log^+|\alpha^\sigma|+\sum_\gerp\max\{0,-\nu_\gerp(\alpha)\}\log\norm\gerp\right), 
$$
where the first sum runs over the complex embeddings of~$K$ and the second sum runs over the primes of~$K$.   If ${\alpha \ne 0}$ then ${\height(\alpha) =\height(\alpha^{-1})}$, and we obtain the formula 
\begin{equation}
\label{ehe}
\height(\alpha) =\frac{1}{d}\left(\sum_{\sigma:K\hookrightarrow\C}-\log^-|\alpha^{\sigma}|+\sum_\gerp\max\{0,\nu_\gerp(\alpha)\}\log\norm\gerp\right), 
\end{equation}
where ${\log^-=\min\{\log,0\}}$. 

Besides ${\log^+}$ and $\log^-$ we will also use  ${\logast =\max\{\log , 1\}}$. 

We use $O_1(\cdot)$ as the quantitative version of the familiar $O(\cdot)$ notation: ${A=O_1(B)}$ means ${|A|\le B}$. 

We will use the following estimates for the arithmetical functions $\omega(n)$, $\ph(n)$ and $\pi(x)$:
\begin{align}
\label{erome}
\omega(n) &\le 1.4 \frac{\log n}{\log\log n} && (n\ge 3), \\
\label{ersph} 
\ph(n) &\ge 0.5 \frac{n}{\log\log n} && (n\ge 10^{20}), \\
\label{erspi}
\frac{x}{\log x}\le \pi(x)& \le 1.3\frac{x}{\log x} && (x\ge 3). 
\end{align}
See \cite[Théorème~11]{Ro83}, \cite[Theorem~15]{RS62} and  \cite[page~69, Corollary~1]{RS62}.


\section{Logarithmic forms}
\label{syu}

In this section,~$K$ is a number field of degree~$d$, and~$\gerp$ is a prime of~$K$ with underlying rational prime ${p\ge 5}$. Note that we will have ${p\ge 5}$ in both Sections~\ref{sordrat} and~\ref{sordquad}, where  Theorem~\ref{thyu} will be applied: see~\eqref{epge5} and~\eqref{epge5bis}. Let~$u$ be such that~$K$ contains a primitive root of unity of order~$2^u$, but not of order $2^{u+1}$. We pick a primitive root of unity of order~$2^u$ and denote it~$\zeta$.  


Our principal tool will be the following result of Yu~\cite{Yu13}. Recall that ${\alpha \in K^\times}$ is called a $\gerp$-adic unit if ${\nu_\gerp(\alpha) =0}$.

\begin{theorem}
\label{thyu}
Let ${\alpha_1, \ldots, \alpha_k\in K^\times}$ be multiplicatively independent  $\gerp$-adic units Let~$\delta$ and~$\Omega$  be real numbers satisfying 
\begin{align*}
\delta&\le
\begin{cases}
\bigl[\F_\gerp^\times:\langle\bar\zeta, \bar\alpha_1, \ldots, \bar\alpha_k\rangle\bigr], & \text{if $[K(\alpha_1^{1/2}, \ldots,\alpha_k^{1/2}):K]=2^k$},\\
1, & \text{otherwise}, 
\end{cases}\\
\Omega&=\max\left\{\frac{\norm\gerp}{\delta}\left(\frac{k}{\log\norm\gerp}\right)^k,e^k\log\norm\gerp\right\},  
\end{align*}
where $\langle\bar\zeta, \bar\alpha_1, \ldots, \bar\alpha_k\rangle$ is the  subgroup of the multiplicative group~$\F_\gerp^\times$ generated by the images of ${\zeta, \alpha_1, \ldots, \alpha_k}$.

Furthermore, let ${b_1, \ldots, b_k}$ be rational integers, not all~$0$, and denote
$$
B=\max\{|b_1|, \ldots, |b_k|\}. 
$$
Then 
\begin{equation}
\label{eyu}
\nu_\gerp\bigl(\alpha_1^{b_1}\cdots \alpha_k^{b_k}-1\bigr)\le 10^5 d^{k+2}(\logast d)^3\cdot 30^k k^{5/2}(\logast k)\height(\alpha_1)\cdots\height(\alpha_k)
\Omega\logast B. 
\end{equation}
\end{theorem}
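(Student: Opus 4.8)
The plan is to deduce Theorem~\ref{thyu} from the main result of~\cite{Yu13} by specializing and crudely bounding Yu's parameters. Recall that Yu's theorem estimates $\nu_\gerp(\alpha_1^{b_1}\cdots\alpha_k^{b_k}-1)$ by a product of: a combinatorial constant $C(k,d)$, explicit in~$k$ and in the degree~$d$ and of size $d^{k+2}(\logast d)^{O(1)}k^{O(k)}$; a product of \emph{modified heights} $h'(\alpha_1)\cdots h'(\alpha_k)$, where $h'(\alpha_j)$ is the maximum of $\height(\alpha_j)$ with an auxiliary term of size $O(\log\norm\gerp/d)$ coming from the local analysis; an index factor of the shape $\norm\gerp\,\delta^{-1}(\log\norm\gerp)^{-k}$ measured against a lower-order term; and $\logast B$. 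First I would translate notation, writing $\norm\gerp=p^f$ with $f$ the residue degree of~$\gerp$, so $\log\norm\gerp=f\log p$, and matching Yu's prime, residue field and chosen $2^u$-th root of unity with ours; the hypothesis $p\ge5$ ensures $\log\norm\gerp>1$ and disposes of Yu's small-prime degeneracies.

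Next I would clear the modified heights: substituting $h'(\alpha_j)=\height(\alpha_j)+O(\log\norm\gerp/d)$ and expanding the product, every term other than $\prod_j\height(\alpha_j)$ carries at least one factor $\log\norm\gerp/d$. Since $\height(\alpha_j)$ cannot be bounded from below, these surplus terms must instead be dominated directly, which is the purpose of defining~$\Omega$ as a maximum. The first argument $\frac{\norm\gerp}{\delta}\bigl(\frac{k}{\log\norm\gerp}\bigr)^k$ carries Yu's index savings, and the second argument $e^k\log\norm\gerp$ is a deliberately crude floor that dominates every auxiliary contribution and also covers the range (small~$\gerp$, or $\delta$ close to $\norm\gerp$) where Yu's main term becomes negligible. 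All remaining numerical losses --- the implied constants, the passage from Yu's $k^{O(k)}$ to $30^k k^{5/2}\logast k$, and the powers of~$d$ and~$\logast d$ --- are then absorbed into the generous constant $10^5d^{k+2}(\logast d)^3\cdot30^k k^{5/2}(\logast k)$, a lengthy but routine comparison of the exponents of~$d$, of~$k$, and of $\log\norm\gerp$ on the two sides.

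It remains to reconcile the parameter~$\delta$ with Yu's hypotheses. Yu's index bound applies only under a nondegeneracy condition on $\alpha_1,\dots,\alpha_k$, and I would check that this is precisely captured by the alternative $[K(\alpha_1^{1/2},\dots,\alpha_k^{1/2}):K]=2^k$ in the statement, with the safe value $\delta=1$ in the complementary case; here the maximality of~$2^u$ in the choice of~$\zeta$ is what guarantees that $\langle\bar\zeta,\bar\alpha_1,\dots,\bar\alpha_k\rangle$ is the subgroup that genuinely occurs in Yu's index. The main obstacle, I expect, is the middle step: tracking Yu's modified heights and his several competing lower-order terms and verifying that they are all simultaneously dominated by the single factor~$\Omega$ together with the explicit constant --- in particular the boundary case $k=1$, where Yu's estimate may be stated separately (or one invokes a $p$-adic Baker bound for one logarithm) and where one must confirm that the floor $e^k\log\norm\gerp$ still does the job.
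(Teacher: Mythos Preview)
Your plan deduces the bound directly from Yu's main theorem in~\cite{Yu13}, reconstructing the passage from modified heights to ordinary heights and explaining the two branches of~$\Omega$ along the way. The paper does something much shorter: it starts not from Yu but from Stewart's Lemma~3.1 in~\cite{St13}, which is \emph{already} a simplification of Yu in which the modified heights have been replaced by $\height(\alpha_1)\cdots\height(\alpha_k)$ and the quantity~$\Omega$ already appears in exactly the form stated here. From that point only four elementary inequalities are needed --- bounding $\max\{\logast B,(k+1)(5.4k+\log d)\}$, $(k+1)^{1/2}$, $7e(p-1)/(p-2)$, and $\log(e^4(k+1)d)$ --- and the resulting expression is visibly below $10^5 d^{k+2}(\logast d)^3\cdot 30^k k^{5/2}(\logast k)\,\height(\alpha_1)\cdots\height(\alpha_k)\,\Omega\,\logast B$.

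So your route is in principle viable (it is, after all, what Stewart himself did to get his Lemma~3.1), but it is a genuine re-derivation of that lemma rather than a proof of the present theorem. Moreover, your account of the middle step is not quite right: writing $h'(\alpha_j)=\height(\alpha_j)+O(\log\norm\gerp/d)$ and expanding the product produces cross terms like $(\log\norm\gerp/d)^k$ that are \emph{not} dominated by $\prod_j\height(\alpha_j)\cdot e^k\log\norm\gerp$, since the $\height(\alpha_j)$ can be arbitrarily small. The floor $e^k\log\norm\gerp$ in~$\Omega$ does not serve the purpose you assign it; it comes from a separate branch of Yu's $p$-adic analysis, and the elimination of modified heights in~\cite{Yu13,St13} proceeds differently. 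The clean fix is simply to quote Stewart's Lemma~3.1 as the starting point, which collapses your ``main obstacle'' entirely.
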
 

\begin{proof}
This is a simplification (with slightly bigger numerical constants) of \cite[Lemma~3.1]{St13}, which, on its own, is a simplification of the main theorem of~\cite{Yu13}. 

Let us explain how we deduce~\eqref{eyu} from \cite[Lemma~3.1]{St13}. Note that our~$k$ corresponds to~$n$ in~\cite{St13}. 
We will repeatedly use the  observations of the following kind: for ${a\ge 0}$ and ${x,y\ge 1}$ we have 
${a+x+y\le (a+2)xy}$. 

Plugging the  estimates 
\begin{align*}
\max\{\logast B, (k+1)(5.4k+\log d)\}&\le \logast B \cdot 13k^2\logast d,\\
(k+1)^{1/2}&\le \sqrt2k^{1/2},\\
7e\frac{p-1}{p-2}&\le \frac{28}{3}e \qquad (\text{recall that ${p\ge 5}$}), \\
\log(e^4(k+1)d)&\le 8\logast d\logast k 
\end{align*}
into \cite[Lemma~3.1]{St13} (with~$n$ replaced by~$k$), we  bound the left-hand side of~\eqref{eyu} by 
\begin{equation}
\label{eugly}
376\left(\frac{28}{3}e\right)^k d^{k+2} \cdot 104\sqrt2k^{5/2}\logast k(\logast d)^3 \height(\alpha_1)\cdots\height(\alpha_k) \Omega\logast B. 
\end{equation}
This is clearly smaller that the right-hand side of~\eqref{eyu}. 
\end{proof}

\section{Proof of Theorem~\ref{thordrat}}
\label{sordrat}
The following lemma is totally trivial, but we state it here because it is our principal tool in making~$p_0$ independent of~$\gamma$.

\begin{lemma}
\label{ltriv}
Let~$K$ be a field, 
${\gamma_1, \ldots, \gamma_k\in K^\times}$  multiplicatively independent, and ${\gamma \in K^\times}$ not a root of unity. Then, after a suitable renumbering of ${\gamma_1, \ldots, \gamma_k}$, the numbers ${\gamma, \gamma_2, \ldots, \gamma_k}$ become multiplicatively independent. 
\end{lemma}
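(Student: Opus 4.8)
The plan is to argue by contradiction. Suppose that for \emph{every} index ${i\in\{2,\ldots,k\}}$, the tuple ${\gamma,\gamma_2,\ldots,\widehat{\gamma_i},\ldots,\gamma_k}$ obtained by deleting $\gamma_i$ and inserting $\gamma$ is multiplicatively dependent — more precisely, suppose no renumbering works, which means: for each $i$, the set ${\{\gamma\}\cup(\{\gamma_1,\ldots,\gamma_k\}\setminus\{\gamma_i\})}$ is multiplicatively dependent. Since ${\gamma_1,\ldots,\widehat{\gamma_i},\ldots,\gamma_k}$ are multiplicatively independent (being a subset of a multiplicatively independent set), such a dependence must involve $\gamma$ with a nonzero exponent; hence for each $i$ there is a relation
$$
\gamma^{m_i} = \prod_{\substack{j=1\\ j\ne i}}^{k}\gamma_j^{e_{ij}}, \qquad m_i\ne 0.
$$

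The key observation is then a rank/exchange argument. Work in the $\Q$-vector space ${V = (K^\times/\text{torsion})\otimes_\Z\Q}$, and let $v,v_1,\ldots,v_k$ denote the images of $\gamma,\gamma_1,\ldots,\gamma_k$. By hypothesis $v_1,\ldots,v_k$ are linearly independent and $v\ne 0$. The relation above says $v$ lies in the span of ${\{v_1,\ldots,v_k\}\setminus\{v_i\}}$, i.e. $v$ has zero coefficient on $v_i$ when expressed in the basis $v_1,\ldots,v_k$ of ${W=\mathrm{span}(v_1,\ldots,v_k)}$. If this holds for every ${i=2,\ldots,k}$ — wait, here I should be careful: the statement allows renumbering, so ``no renumbering works'' means that for \emph{every} choice of an index ${i\in\{1,\ldots,k\}}$ to replace, the result is dependent, not just ${i\ge 2}$. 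So for every ${i\in\{1,\ldots,k\}}$, $v$ expressed in the basis $v_1,\ldots,v_k$ has zero coefficient on $v_i$. Since this holds for all $i$, we get $v=0$ in $V$, i.e.\ $\gamma$ is a root of unity — contradiction.

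Concretely, then: express the image of $\gamma$ in $V$; since $\gamma$ is not a root of unity, this image is nonzero, so $\gamma^N = \prod_j \gamma_j^{c_j}$ for some integer ${N>0}$ and integers $c_j$ not all zero (using that $\gamma_1,\ldots,\gamma_k$ span a finite-index subgroup of... no — rather, we do not know $\gamma$ lies in their span at all). Let me restate cleanly without assuming $\gamma$ is in the span: pick any index $j_0$ with ${c_{j_0}\ne 0}$ in some \emph{nontrivial} multiplicative relation among ${\gamma,\gamma_1,\ldots,\gamma_k}$ — if no such relation exists then ${\gamma,\gamma_1,\ldots,\gamma_k}$ are already multiplicatively independent and we simply take ${i=1}$, replacing $\gamma_1$ by $\gamma$, which trivially keeps independence after dropping $\gamma_1$... no, that is not the claim either. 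The honest route: if ${\gamma,\gamma_2,\ldots,\gamma_k}$ happen to be multiplicatively independent we are done with the identity renumbering; otherwise there is a nontrivial relation ${\gamma^{N}\gamma_2^{a_2}\cdots\gamma_k^{a_k}=1}$, and since $\gamma_2,\ldots,\gamma_k$ are independent we must have ${N\ne 0}$; now I look for where this can fail and fix it by swapping in $\gamma_1$. The cleanest formulation avoiding case fatigue is the linear-algebra one above. The only genuine subtlety — and the step I expect to need the most care — is handling torsion correctly: multiplicative dependence in $K^\times$ is \emph{not} the same as linear dependence of the images in the torsion-free quotient unless one is careful (a relation $\prod\gamma_j^{e_j}=\zeta$ with $\zeta$ a root of unity still counts as a dependence). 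But passing to ${V=K^\times\otimes_\Z\Q}$, which kills torsion, makes ``multiplicatively independent'' equivalent to ``linearly independent in $V$'' and ``not a root of unity'' equivalent to ``nonzero in $V$,'' so the Steinitz exchange lemma applies directly: $v\notin\mathrm{span}$ of the $k-1$ vectors obtained by deleting one $v_i$ for at least one $i$, for otherwise $v$ would be forced to $0$. Renumber so that this $i$ becomes index $1$; then ${v,v_2,\ldots,v_k}$ are linearly independent in $V$, i.e.\ ${\gamma,\gamma_2,\ldots,\gamma_k}$ are multiplicatively independent.
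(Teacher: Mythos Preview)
Your argument is correct. The paper gives no proof of this lemma---it simply declares it ``totally trivial''---so there is nothing to compare against; your approach of passing to $K^\times\otimes_\Z\Q$ (where multiplicative independence becomes $\Q$-linear independence and ``not a root of unity'' becomes ``nonzero'') and then invoking the Steinitz exchange lemma is exactly the standard one-line justification the authors presumably have in mind.
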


We will also need a lower bound for the height of an algebraic number.

\begin{lemma}
\label{lhe}
Let~$\gamma$ be an algebraic number of degree~$d$, not a root of unity. Then
\begin{align}
\label{edone}
\height(\gamma) &\ge \log 2 && \text{for $d=1$}, \\
\label{edtwo}
2\height(\gamma)&\ge \log\frac{1+\sqrt5}2 && \text{for $d=2$}, \\
\label{edall}
d\height(\gamma)&\ge \frac{1}{4(\logast d)^3} && \text {for any $d$}. 
\end{align}
\end{lemma}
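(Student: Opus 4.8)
The plan is to treat the three estimates separately, since each corresponds to a classical sharp lower bound for the height. For~\eqref{edone}, a rational number that is not a root of unity is a rational number other than~$0,\pm1$; writing it in lowest terms as~$a/b$ with ${|a|\ne|b|}$, the height is ${\log\max\{|a|,|b|\}\ge\log 2}$, which is exactly the claimed bound and is attained at~$\gamma=2$. For~\eqref{edtwo}, the classical result of Schinzel, or the explicit computation of Lehmer-type minima in degree~$2$, gives that the smallest height of a quadratic non-root-of-unity is ${\frac12\log\frac{1+\sqrt5}2}$, attained at the golden ratio ${(1+\sqrt5)/2}$ (equivalently, at a root of ${x^2-x-1}$); one can quote this directly, or reprove it quickly by noting that a quadratic algebraic integer with conjugates ${\alpha,\alpha'}$ has ${2\height(\alpha)\ge\log^+|\alpha|+\log^+|\alpha'|}$ and, if ${|\alpha\alpha'|}$ is a nonzero integer, a short case analysis on ${|\alpha\alpha'|\in\{1,2,\dots\}}$ together with ${\alpha+\alpha'\in\Z}$ forces ${\max|\alpha|\ge(1+\sqrt5)/2}$ unless~$\alpha$ is a root of unity. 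The non-integral case is easier since then the finite part of the height already contributes at least ${\frac12\log 2}$.

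For~\eqref{edall}, the natural tool is an explicit lower bound for the height in terms of the degree, of Dobrowolski type but in the weaker (and much simpler) Blanksby--Montgomery / Stewart / Dobrowolski-baby-case form ${\height(\gamma)\gg 1/(d(\log 3d)^3)}$ or similar. Since we only need the clean constant~$1/4$, I would invoke a known fully explicit statement of exactly this shape — for instance Voutier's explicit Dobrowolski bound ${\height(\gamma)\ge\frac{1}{4}\bigl(\frac{\log\log d}{\log d}\bigr)^3/d}$ for ${d\ge 2}$, or a Stewart-style elementary bound — and then reconcile the stated form ${d\,\height(\gamma)\ge 1/(4(\logast d)^3)}$ with it. The one point needing care is small~$d$: for ${d=1}$ the inequality reads ${\height(\gamma)\ge1/4}$, which follows from~\eqref{edone} since ${\log 2>1/4}$; for ${d=2}$ it reads ${2\height(\gamma)\ge1/4}$, which follows from~\eqref{edtwo} since ${\log\frac{1+\sqrt5}2>0.48>1/4}$; and for ${d\ge3}$ one has ${\logast d=\log d}$ and the cited Dobrowolski-type bound applies directly (possibly after checking that the constant in the quoted reference, once the ${(\log\log d/\log d)^3}$ factor is bounded below by a constant for a range of small~$d$ and handled asymptotically for large~$d$, is comfortably above~$1/4$).

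The main obstacle is purely bookkeeping rather than mathematical: making sure the constant~$1/4$ is actually valid uniformly in~$d$ with whatever explicit Dobrowolski-type inequality one cites, in particular bridging the gap between the ${(\log\log d/\log d)^3}$ that appears in sharp explicit bounds and the ${(\log d)^{-3}}$ demanded here — this is fine for large~$d$ but the crossover region ${d}$ of moderate size must be checked, either by a finite computation or by a cruder but cleaner elementary bound valid for all ${d\ge3}$. I expect~\eqref{edone} and~\eqref{edtwo} to be essentially immediate citations, and~\eqref{edall} to be the only part requiring a genuine (if short) argument.
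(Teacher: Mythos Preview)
Your proposal is correct and follows essentially the same approach as the paper: \eqref{edone} is trivial, \eqref{edtwo} is quoted from Schinzel, and \eqref{edall} is deduced from Voutier's explicit Dobrowolski bound for ${d\ge 3}$ and from \eqref{edone}, \eqref{edtwo} for ${d\le 2}$. The bookkeeping concern you raise disappears if you cite the right statement from Voutier: his Corollary~2 gives ${d\,\height(\gamma)\ge 2/(\log(3d))^3}$, and for ${d\ge 3}$ one has ${2/(\log(3d))^3\ge 1/(4(\log d)^3)}$ since ${2\log d\ge \log(3d)}$ there, so no crossover analysis is needed.
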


\begin{proof}
Inequality~\eqref{edone} is trivial, and~\eqref{edtwo} is a famous result of Schinzel~\cite{Sc75} (see also~\cite{HS93} for a very simple proof). 
Inequality~\eqref{edall}, for sufficiently large~$d$, follows from the famous work of Dobrowolski~\cite{Do79}. To obtain it for all  
 ${d\ge 3}$, we invoke  Voutier's numerical adaptation~\cite{Vo96} of Dobrowolski's  result. In particular, \cite[Corollary~2]{Vo96} gives ${d\height(\gamma)\ge 2/(\log(3d))^3}$, which clearly implies~\eqref{edall} for ${d\ge 3}$. Finally, for  ${d\le 2}$ inequality~\eqref{edall} follows from~\eqref{edone} and~\eqref{edtwo}. 
\end{proof}

We can now start the proof of Theorem~\ref{thordrat}. To simplify notation, we denote ${P=\norm\gerp }$. We  will assume that 
\begin{equation}
\label{eassum}
\norm\gerp=P\ge p_0=\exp\bigl(80000 d (\logast d)^2 \bigr)
\end{equation}
throughout the proof. Since ${\norm\gerp \le p^d}$, we have
\begin{equation}
\label{epge5}
p\ge p_0^{1/d}\ge \exp\bigl(80000  (\logast d)^2 \bigr)\ge 5,  
\end{equation}
which is required to apply Theorem~\ref{thyu}. 



Let~${x}$   be specified later to satisfy
\begin{equation}
\label{eassux}
x\ge 200(\logast d)^2.  
\end{equation}
Denote ${k=\pi(x)}$.  Let ${\ell_1, \ldots, \ell_k}$  be the~$k$ primes not exceeding~$x$ numbered somehow, not necessarily in the increasing order. 
Since  ${\ell_1, \ldots, \ell_k}$ are multiplicatively independent and~$\gamma$ is not a root of unity, Lemma~\ref{ltriv} implies that, after renumbering, the numbers ${\gamma, \ell_2, \ldots, \ell_k}$ are multiplicatively independent.

We apply Theorem~\ref{thyu} with 
\begin{align*}
&\alpha_1=\frac{\gamma}{\ell_2\cdots\ell_k}; \qquad \alpha_i=\ell_i \quad (i=2, \ldots,k); \\ 
&b_i=n \quad (i=1,\ldots, k);  \qquad \delta=1. 
\end{align*}
Since ${\height(\ell_i)=\log\ell_i\le \log x}$, we obtain 
\begin{align*}
\nu_\gerp(\gamma^n-1) &=\nu_\gerp\bigl(\alpha_1^n\ell_2^n\cdots \ell_k^n-1\bigr)\\
&\le 
10^5d^{k+2}(\logast d)^3\cdot 30^{k}k^{5/2}(\logast k )\height(\alpha_1)(\log x)^{k-1}\Omega\logast n, 
\end{align*}
where 
${\Omega = \max\bigl\{P(k/\log P)^k, e^k \log P\bigr\}}$. 
We will see  later, when we specify~$x$, that 
\begin{equation}
\label{easstwo}
P(k/\log P)^k \ge  e^k\log P,
\end{equation}
and so we have ${\Omega = P(k/\log P)^k}$.

Using Lemma~\ref{lhe}, we estimate 
$$
\height(\alpha_1)\le \height(\gamma)+(k-1)\log x \le 4d(\logast d)^3\height(\gamma)k\log x . 
$$
Hence 
\begin{equation*}
\nu_\gerp(\gamma^n-1)\le 4\cdot10^5d^3(\logast d)^6k^{7/2}(\logast k) P\left(\frac{30dk\log x}{\log P}\right)^k\height(\gamma)\logast n .
\end{equation*}
We want to simplify this estimate.

It follows from~\eqref{eassux} that  ${k\ge \pi(200)= 46}$, which easily implies that
\begin{equation*}
4\cdot10^5k^{7/2}(\logast k)\le 2^k.
\end{equation*}
Also, using~\eqref{erspi}, we obtain 
\begin{equation*}
k\ge \frac{200(\logast d)^2}{\log\bigl(200(\logast d)^2\bigr)},  
\end{equation*} 
which implies that ${d^3(\logast d)^6\le 2^k}$. Indeed, this is obvious when ${d=1,2}$. When ${d\ge 3}$, it is sufficient to prove that
\[
\frac{200(\log d)^2}{\log(200)+2\log\log d}\ge \frac{3\log d+6\log\log d}{\log 2}.
\]
This is true since $200\log 2(\log d)^2\ge 9\log d(\log 200+2\log d)$ holds when $d\ge 3$.

Finally, again using~\eqref{erspi}, we estimate  
${k\log x\le  1.3 x}$. 
Since 
$$
2\cdot 2\cdot 1.3 \cdot 30 <160,
$$ 
this implies the estimate 
\begin{equation}
\label{easier}
\nu_\gerp(\gamma^n-1)\le  P\left(\frac{160dx}{\log P}\right)^k\height(\gamma)\logast n .
\end{equation}

It is the time to specify~$x$. We set 
${x=(1/400d)\log P}$, which gives 
\begin{equation}
\label{e23}
\nu_\gerp(\gamma^n-1)\le P\cdot 0.4^k\height(\gamma)\logast n. 
\end{equation}
Note that~\eqref{eassux} is satisfied with our choice of~$x$, because of~\eqref{eassum}. 

Now we are almost done. 
Once again using~\eqref{erspi}, we obtain
\begin{equation*}
k \ge \frac{x}{\log x}\ge \frac{1}{400d}\frac{\log P}{\log\log P}. 
\end{equation*}
Substituting this to~\eqref{e23}, we obtain  
\begin{equation*}
\nu_\gerp(\gamma^n-1)\le  P\exp\left(-0.002d^{-1}\frac{\log P}{\log\log P}\right)\height(\gamma)\logast n, 
\end{equation*}
as wanted.

We are left with checking that assumption~\eqref{easstwo} holds true with our choice of~$x$. It suffices to show that ${P\ge (e\log P)^{k+1}}$. As we have seen above, ${k\ge 46}$, and we use~\eqref{erspi} to obtain
$$
k+1 \le \frac{47}{46}\pi(x) \le 1.4  \frac{x}{\log x} \le \frac{1.4}{400}\frac{\log P}{\log((1/400)\log P)}. 
$$
Since ${P\ge e^{80000}}$, we have 
$$
\frac{1}{400}\log P\ge (\log P)^{0.4},\qquad e\log P \le (\log P)^{1.1 }.
$$
It follows that 
$$
k+1 \le 0.009\frac{\log P}{\log\log P}, \qquad (e\log P)^{k+1}\le P^{0.01}\le P. 
$$
This completes the proof of the theorem.


\section{Quadratic  fields}
\label{squad}
We need to recall some  facts about quadratic fields. In this section, unless otherwise stated,~$K$ denotes a quadratic field. We denote by~$D_K$ and~$h_K$ the discriminant and the class number of~$K$ respectively. If~$K$ is a real quadratic field then we denote by  $\eta_K$ the fundamental unit~$\eta$ satisfying ${\eta>1}$. It will be convenient to set ${\eta_K=1}$ for imaginary~$K$. We denote by~$\sigma$ the non-trivial Galois morphism of~$K$ over~$\Q$. 
Note that, when~$K$ is real, we have
\begin{equation}
\label{eleta}
\eta_K \ge \frac{1+\sqrt5}{2}. 
\end{equation}
We set  ${\mu=\#\calO_K^\times/2}$; in other words, 
$$
\mu=
\begin{cases}
3, & \text{if $K=\Q(\sqrt{-3})$}, \\
2, & \text{if $K=\Q(i)$}, \\
1, & \text{in all other cases}. 
\end{cases}
$$

\begin{proposition}
\begin{enumerate}
\item
Let~$K$ be an imaginary quadratic field. 
Then 
\begin{equation}
\label{ehleim}
h_K\le \mu\pi^{-1} |D_K|^{1/2}(2+\log|D_K|). 
\end{equation}

\item
Let~$K$ be a real quadratic field. Then 
\begin{equation}
\label{ehlere}
h_K\log\eta_K \le \pi^{-1} D_K^{1/2}(2+\log D_K). 
\end{equation}

\item
For any quadratic field,~$K$, we have 
\begin{align}
\label{ehleun}
h_K&\le 3|D_K|^{1/2}\log|D_K|, \\
\label{eeta}
\log\eta_K &\le |D_K|^{1/2}\log |D_K|. 
\end{align}
\end{enumerate}
\end{proposition}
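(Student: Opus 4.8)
The plan is to derive the proposition from Dirichlet's analytic class number formula together with explicit upper bounds for $L(1,\chi)$, where $\chi=\chi_{D_K}=\left(\tfrac{D_K}{\cdot}\right)$ is the Kronecker character attached to~$K$; this is a primitive real Dirichlet character of conductor~$|D_K|$, even when $D_K>0$ and odd when $D_K<0$. Since $\zeta_K(s)=\zeta(s)L(s,\chi)$, the residue of $\zeta_K$ at $s=1$ equals $L(1,\chi)$, and comparing with the standard residue formula $\kappa_K=2^{r_1}(2\pi)^{r_2}h_KR_K/(w_K|D_K|^{1/2})$ gives, for imaginary~$K$,
\[
h_K=\frac{w_K}{2\pi}|D_K|^{1/2}L(1,\chi)=\frac{\mu}{\pi}|D_K|^{1/2}L(1,\chi)\qquad(w_K=\#\calO_K^\times=2\mu),
\]
and, for real~$K$ (regulator $\log\eta_K$, $w_K=2$),
\[
h_K\log\eta_K=\tfrac12 D_K^{1/2}L(1,\chi).
\]
Thus part~(1) is \emph{exactly} the inequality $L(1,\chi)\le 2+\log|D_K|$, and part~(2) is exactly $L(1,\chi)\le\frac2\pi\bigl(2+\log D_K\bigr)$.

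For the imaginary case I would give the short elementary argument: split $L(1,\chi)=\sum_{n\le|D_K|}\chi(n)/n+\sum_{n>|D_K|}\chi(n)/n$; the head is $\le\sum_{n\le|D_K|}1/n\le 1+\log|D_K|$, and partial summation together with the bound $\bigl|\sum_{n\le t}\chi(n)\bigr|\le|D_K|/2$ for all~$t$ (full periods cancel, and of the two complementary partial sums over one period one has at most $|D_K|/2$ terms) bounds the tail by $\tfrac12\cdot\frac{|D_K|}{|D_K|+1}<\tfrac12$. Hence $L(1,\chi)\le\tfrac32+\log|D_K|$, which gives~\eqref{ehleim}.

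The estimate needed for the real case~\eqref{ehlere} is the main obstacle: after substitution the trivial bound above produces the constant $\tfrac12$ rather than $\tfrac1\pi$ in front of $\log D_K$, so genuine cancellation in the character sum is required and the naive partial summation will not do. Rather than reprove it I would invoke a known explicit upper bound for $L(1,\chi)$ attached to a primitive real \emph{even} character, of the shape $L(1,\chi)\le\tfrac12\log D_K+1$ (such bounds, sharper than the trivial one, rest on Pólya--Vinogradov-type estimates and are available explicitly in the literature, e.g.\ in work of Louboutin); one then checks directly that $\tfrac12\log D_K+1\le\tfrac2\pi(2+\log D_K)$ for every real quadratic discriminant $D_K\ge5$, which yields~\eqref{ehlere}.

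Part~(3) follows formally from~\eqref{ehleim}--\eqref{ehlere}, using the elementary inequality $2+\log t\le\pi\log t$ valid for $t\ge3$. For the bound on $h_K$: in the imaginary case $\mu\le3$ and $|D_K|\ge3$, so \eqref{ehleim} gives $h_K\le\tfrac3\pi|D_K|^{1/2}(2+\log|D_K|)\le3|D_K|^{1/2}\log|D_K|$; in the real case one combines~\eqref{ehlere} with $\log\eta_K\ge\log\tfrac{1+\sqrt5}{2}$ (inequality~\eqref{eleta}) to bound $h_K$ and again uses $2+\log t\le\pi\log t$. For the bound on $\eta_K$: when~$K$ is imaginary $\eta_K=1$ by convention and there is nothing to prove, while when~$K$ is real, $\log\eta_K\le h_K\log\eta_K\le\tfrac1\pi D_K^{1/2}(2+\log D_K)\le D_K^{1/2}\log D_K$ by $h_K\ge1$ and $D_K\ge5$.
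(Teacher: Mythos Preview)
Your proposal is correct and follows essentially the same route as the paper. The paper simply quotes Hua's textbook for parts~(1) and~(2) and then deduces part~(3) exactly as you do (using $\mu\le 3$, $|D_K|\ge 3$ in the imaginary case, \eqref{eleta} and $D_K\ge 5$ in the real case, and $h_K\ge 1$ for~\eqref{eeta}); you go further by unpacking the class number formula and supplying an explicit elementary bound for $L(1,\chi)$ in the imaginary case, while for the real case you, like the paper, defer to the literature (Louboutin rather than Hua).
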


\begin{proof}
Estimates~\eqref{ehleim} and~\eqref{ehlere} are well-known; see, for instance,  Theorems~10.1 and~14.3 in \cite[Chapter 12]{Hu82}. Estimate~\eqref{ehleun} follows, in the imaginary case, from ${\mu\le 3}$ and ${|D_K|\ge 3}$, and in the real case from~\eqref{eleta} and  
${|D_K|\ge 5}$. Finally,~\eqref{eeta} is trivial in the imaginary case, and in the real case it follows from ${h_K\ge 1}$ and ${D_K\ge 5}$. 
\end{proof}

Denote by  $\pi_s(x,K)$ the counting function of rational primes that split in~$K$. 

\begin{proposition}
\label{pcountsplit}
For
\begin{equation}
\label{exge}
x\ge \max\{10^{10},e^{|D_K|}\}
\end{equation}
we have
\begin{equation}
\label{ephkast}
\pi_s(x,K) \ge \frac{1}{2}\frac{x}{\log x}-\frac{\ph(|D_K|)}{320}\frac{x}{(\log x)^2}. 
\end{equation}
\end{proposition}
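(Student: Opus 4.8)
The plan is to obtain the lower bound for $\pi_s(x,K)$ by relating splitting of primes in the quadratic field $K$ to a congruence condition modulo $D_K$, and then invoking an explicit form of the prime number theorem in arithmetic progressions. Recall that a rational prime $p \nmid D_K$ splits in $K$ if and only if the Kronecker symbol $\left(\frac{D_K}{p}\right)=1$; equivalently, $p$ lies in one of the $\ph(|D_K|)/2$ residue classes modulo $|D_K|$ that make the quadratic character $\chi_{D_K}$ (the character attached to $K$, of conductor $|D_K|$) take value $+1$. So up to the finitely many ramified primes (which number at most $\omega(|D_K|) \le \log|D_K|$, negligibly small for $x$ in the stated range), $\pi_s(x,K)$ equals the number of primes $p\le x$ in those $\ph(|D_K|)/2$ arithmetic progressions. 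Writing $q=|D_K|$, I would express this as
$$
\pi_s(x,K) = \sum_{\substack{1\le a \le q \\ \chi_{D_K}(a)=1}} \pi(x;q,a) + O_1(\log q),
$$
and the goal becomes showing that the main term $\sum \pi(x;q,a)$ over these $\ph(q)/2$ classes is at least $\tfrac12 x/\log x - \tfrac{\ph(q)}{320}(x/\log x)^2$ — wait, more precisely at least that expression; note the error term as written in \eqref{ephkast} has $\ph(|D_K|)$ times $x/(\log x)^2$, so I want an explicit PNT-in-AP bound whose error per class is of size $c\, x/(\log x)^2$ with $c$ small enough that summing $\ph(q)/2$ of them yields the constant $1/320$.

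The key step is thus an explicit Siegel–Walfisz-type or, better, a fully explicit Page/Dirichlet-style estimate of the form $\pi(x;q,a) = \tfrac{1}{\ph(q)}\pi(x) + (\text{error})$ valid uniformly for $q$ not too large relative to $x$ — and here the hypothesis $x \ge e^{|D_K|}$, i.e. $\log x \ge q$, is exactly what makes $q$ small compared to $x$ (it is much stronger than the classical Siegel–Walfisz range $q \le (\log x)^A$). Under such a strong range one can use an explicit zero-free region for $L(s,\chi)$ — the Landau–Page argument shows there is at most one exceptional real zero, but for $q$ this small compared to $x$ its contribution is controlled — to get $\pi(x;q,a) = \tfrac{1}{\ph(q)}\,\Li(x) + O_1\!\big(C\, x\, e^{-c\sqrt{\log x}}\big)$ or even an error of the shape $x/(\log x)^2$ after summing. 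Actually, the cleanest route: summing over the $\ph(q)/2$ good classes and over all $\ph(q)$ classes, the main term $\tfrac{\ph(q)/2}{\ph(q)}\pi(x) = \tfrac12\pi(x) \ge \tfrac12 x/\log x$ by \eqref{erspi}, and the accumulated error is what must be bounded by $\tfrac{\ph(q)}{320}\cdot x/(\log x)^2$; so per class the error budget is $\tfrac{1}{160}\cdot x/(\log x)^2$. I would cite an explicit version of the prime number theorem for arithmetic progressions (for instance from the explicit-PNT literature, or deduce it from explicit bounds on $\psi(x;q,a)$ via partial summation) that delivers exactly this in the range $\log x \ge q$, $x \ge 10^{10}$.

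I expect the main obstacle to be locating or assembling an \emph{explicit} PNT-in-AP estimate with the precise error term $x/(\log x)^2$ per residue class, valid in the wide uniformity range $q \le \log x$, and in particular handling the possible Siegel zero cleanly with explicit constants — the non-explicit version is standard, but pinning down the numerical constant $1/320$ requires care. A secondary technical point is bookkeeping the ramified primes and the passage between counting by the Kronecker symbol and counting by residue classes mod $|D_K|$ (including the distinction between $D_K>0$ and $D_K<0$, where the character is even or odd respectively), but for $x$ in the given range these contribute a fully negligible $O_1(\log|D_K|)$ that is easily absorbed. Once the explicit AP estimate is in hand, the proof is a short summation and an application of \eqref{erspi} and \eqref{ersph}.
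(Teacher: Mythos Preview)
Your approach is essentially identical to the paper's: reduce to $\ph(|D_K|)/2$ residue classes modulo $|D_K|$, apply an explicit PNT in arithmetic progressions with per-class error $\tfrac{1}{160}\,x/(\log x)^2$, and sum. The ``main obstacle'' you flag is resolved by citing Theorem~1.3 of Bennett--Martin--O'Bryant--Rechnitzer~\cite{BMOR18}, which gives exactly $\pi(x;|D|,a)\ge \tfrac{1}{\ph(|D|)}\Li(x)-\tfrac{1}{160}\,x/(\log x)^2$ in the range~\eqref{exge}; then $\Li(x)>x/\log x$ finishes it, and your bookkeeping for ramified primes is unnecessary since you only need a lower bound.
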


\begin{proof}
We denote ${D=D_K}$. 
An odd rational prime~$p$ splits in~$K$ if and only if ${(D/p)=1}$. Primes satisfying this condition belong to one of ${\ph(|D|)/2}$ residue classes $\bmod |D|$. If ${a\bmod |D|}$ is one such class, then for~$x$ satisfying~\eqref{exge} we have
$$
\pi(x;|D|,a) \ge \frac{1}{\ph(|D|)}\Li(x)-\frac{1}{160}\frac{x}{(\log x)^2}, 
$$
see Theorem~1.3 in Bennett et al.~\cite{BMOR18}. As usual, we denote by  ${\pi(x;m,a)}$ the counting function for primes in the congruence class ${a\bmod m}$.

Note that ${\Li(x)>x/\log x}$ for ${x\ge 7}$, because the function   
$$
f(x)=\Li(x)-x/\log x
$$ 
satisfies ${f'(x)=1/(\log x)^2>0}$ and ${f(7)=0.114\ldots>0}$. It follows that 
$$
\pi(x;|D|,a) \ge \frac{1}{\ph(|D|)}\frac{x}{\log x}-\frac{1}{160}\frac{x}{(\log x)^2}.  
$$
Summing up over the ${\ph(|D|)/2}$ residue classes $a\bmod |D|$, we obtain~\eqref{ephkast}.   
\end{proof}

\section{Multiplicatively independent elements}
\label{smult}

We retain the notation and conventions of Section~\ref{squad}.

Stewart's argument in the quadratic case \cite[Section~4]{St13} requires producing in~$K$ many multiplicatively independent elements of norm~$1$ and controllable height. 
Stewart uses for this purpose prime numbers~$p$ with the following properties:
\begin{itemize}
\item
$p$ splits in~$K$, and 
\item
the $K$-primes above~$p$ are principal. 
\end{itemize}
We call them \textit{Stewart primes} in the sequel.

Let $(\pi)$ be a principal $K$-prime above a Stewart prime~$p$. If~$K$ is imaginary then  ${|\pi|=|\pi^\sigma|=p^{1/2}}$. If~$K$ is real then, multiplying~$\pi$ by a suitable power of the fundamental unit~$\eta_K$, we may assume that 
${(p/\eta_K)^{1/2}\le |\pi|,|\pi^\sigma|\le (p\eta_K)^{1/2}}$. 

Stewart associates to~$p$  the algebraic number  ${\theta_p= \pi/\pi^\sigma}$.  For this $\theta_p$ we have ${\norm\theta_p=1}$, and
$$
\height(\theta_p) =\frac12\log p+O_1\left(\frac12\log\eta_K\right); 
$$ 
recall that $O_1(\cdot)$ is quantitative version of $O(\cdot)$, see Section~\ref{sprelim}. Clearly, numbers~$\theta_p$ corresponding to distinct Stewart primes~$p$ are multiplicatively independent.

Using the Class Field Theory and the Tchebotarev Density Theorem, one can show that the relative density of Stewart primes in the set of all primes  is $(2h_K)^{-1}$. Moreover, using recent explicit versions of the Tchebotarev Density Theorem, as in \cite{AK19,KW22,Wi13}, one can give a totally explicit lower estimate for the counting function of Stewart primes.

Unfortunately, following this path, we end up with a rather huge value for the constant $p_0$ in Theorem~\ref{thordquad}, triple exponential in the discriminant of~$K$. For instance,  Theorem~5 of~\cite{KW22} applies for ${x\ge x_1:=\exp(|D_L|^{12})}$, where, in our case,~$L$ is the Hilbert Class Field of~$K$. We have ${|D_L|=|D_K|^{h_K}}$, which would lead to a double exponential value for~$x_1$. And $p_0$, as it is clear from the proof of Theorem~\ref{thordquad}, is exponential in~$x_1$, leading to the triple exp dependence of~$p_0$ in $|D_K|$.  For this reason, we do not pursue this approach in the present article.

Instead of Stewart primes, which are quite sparse, we  use all (sufficiently large) split primes, which have relative density $1/2$. More precisely,  
denote by $S(K)$ the set of   rational primes~$p$  which split in~$K$ and satisfy ${p\ge |D_K|^{1/2}}$. 
To every ${p\in S(K)}$  
we want to associate a certain element ${\theta_p\in K}$. We do it as follows. 

Given ${p\in S(K)}$,  let~$\gerp$ be a $K$-prime above~$p$. Recall that every ideal class contains an integral ideal~$\gera$ such that ${\norm\gera< |D_K|^{1/2}}$. We take such~$\gera$ in the class of $\gerp^{-1}$, so that ${\gerp\gera}$ is a principal ideal. Let~$\alpha$ be a generator of $\gerp\gera$. Then ${|\alpha\alpha^\sigma|=\norm(\gerp\gera)}$. Note also that, since ${\norm\gera< |D_K|^{1/2}}$, the number~$\alpha$ is not involved with any prime from the set $S(K)$ other than~$p$ itself.  

If~$K$ is imaginary then ${|\alpha|=|\alpha^\sigma|=\norm(\gerp\gera)^{1/2}}$. If~$K$ is real then, multiplying~$\alpha$ be a suitable power of~$\eta_K$, we may assume that 
\begin{equation}\label{absofalpha}
\norm(\gerp\gera)^{1/2}\eta_K^{-1/2} \le |\alpha|,|\alpha^\sigma|\le \norm(\gerp\gera)^{1/2}\eta_K^{1/2} . 
\end{equation}
Now we set 
$$
\theta_p=\alpha/\alpha^\sigma. 
$$

\begin{proposition}
\label{pthetas}
\begin{enumerate}
\item
\label{inone}
For every ${p\in S(K)}$ we have ${\norm\theta_p=1}$ and 
$$
\height(\theta_p) = \frac12\log p+ O_1\left(\frac14\log|D_K|+\frac12\log\eta_K\right).  
$$

\item
\label{izesi}
In particular, if ${p\ge \exp(100|D_K|^{1/2}\log|D_K|)}$ then 
\begin{equation}
\label{ezesi}
\height(\theta_p) \le 0.51\log p. 
\end{equation}

\item
\label{idisj}
Each~$\theta_p$ is involved with~$p$, but disjoint from any other prime exceeding ${ |D_K|^{1/2}}$. In particular, it is disjoint from any prime belonging to the set $S(K)$. If~$\gerp$ is a $K$-prime over~$p$, then ${\nu_\gerp(\theta_p)=\pm1}$.

\item
\label{imind}
If ${p_1, \ldots, p_k}$ are distinct elements of $S(K)$ then ${\theta_{p_1}, \ldots, \theta_{p_k}}$ are multiplicatively independent. Moreover,  
${\bigl[K\bigl(\sqrt{\theta_{p_1}}, \ldots, \sqrt{\theta_{p_k}}\bigr):K\bigr]=2^k}$.


\end{enumerate}
\end{proposition}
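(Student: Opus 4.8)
The plan is to verify the four assertions in order, mostly by unwinding the construction of $\theta_p=\alpha/\alpha^\sigma$.

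First, part~\ref{inone}. Since $\theta_p^\sigma=\alpha^\sigma/\alpha=\theta_p^{-1}$, we get $\norm\theta_p=\theta_p\theta_p^\sigma=1$. For the height, apply formula~\eqref{ehe} with $d=2$. The finite part: by part~\ref{idisj} (which we establish independently below) the only prime with $\nu_\gerp(\theta_p)\ne 0$ is the $K$-prime $\gerp$ above $p$, at which $\nu_\gerp(\theta_p)=\pm1$, so the finite contribution is $\tfrac12\log\norm\gerp=\tfrac12\log p$. The archimedean part: $\theta_p$ and $\theta_p^{-1}=\theta_p^\sigma$ are the two conjugates, and
$$
-\tfrac12\bigl(\log^-|\theta_p|+\log^-|\theta_p^\sigma|\bigr)=\tfrac12\bigl|\log|\theta_p|\bigr|=\tfrac12\bigl|\log|\alpha|-\log|\alpha^\sigma|\bigr|.
$$
In the imaginary case this is $0$; in the real case the bounds $\norm(\gerp\gera)^{1/2}\eta_K^{-1/2}\le|\alpha|,|\alpha^\sigma|\le\norm(\gerp\gera)^{1/2}\eta_K^{1/2}$ give $\bigl|\log|\alpha|-\log|\alpha^\sigma|\bigr|\le\log\eta_K$, hence this term is $O_1(\tfrac12\log\eta_K)$. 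Combining, $\height(\theta_p)=\tfrac12\log p+O_1(\tfrac12\log\eta_K)$ in fact; but we must be careful — the finite part of the height involves $\norm\gerp$, and if $p$ splits then $\norm\gerp=p$, so this is fine. Wait, I should double-check whether the $O_1(\tfrac14\log|D_K|)$ is genuinely needed: the issue is that $\height(\theta_p)=\tfrac12(\text{archimedean})+\tfrac12\log p$ only if $\nu_\gerp(\theta_p)=\pm1$ exactly, which holds because $\nu_\gerp(\gera)=0$ (as $\norm\gera<|D_K|^{1/2}<p$ forces $p\nmid\norm\gera$, so $\gerp\nmid\gera$) and $\nu_{\gerp^\sigma}(\gera)$ could be nonzero but then $\nu_{\gerp^\sigma}(\theta_p)=-\nu_{\gerp^\sigma}(\alpha^\sigma)=-\nu_\gerp(\alpha)=0$... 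Actually the cleanest route: $(\theta_p)=(\alpha)/(\alpha^\sigma)=\gerp\gera/(\gerp\gera)^\sigma=\gerp\gera\gerp^{-\sigma}\gera^{-\sigma}$. Since $p$ splits, $\gerp^\sigma\ne\gerp$, and $\gera,\gera^\sigma$ are supported on primes above rational primes $\le\norm\gera<|D_K|^{1/2}$, none of which is $p$. So $\nu_\gerp((\theta_p))=1$, $\nu_{\gerp^\sigma}((\theta_p))=-1$, and the remaining support is inside $\gera\gera^{-\sigma}$, contributing at most $\log\norm\gera<\tfrac12\log|D_K|$ to the finite part, i.e. $O_1(\tfrac14\log|D_K|)$ after dividing by $d=2$. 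That gives part~\ref{inone} exactly as stated, and also proves part~\ref{idisj}: $\theta_p$ is involved with $p$ (via $\gerp$, with $\nu_\gerp(\theta_p)=1$, or $-1$ for $\gerp^\sigma$), and disjoint from every other $q\in S(K)$ since neither $p$ nor any prime dividing $\norm\gera<|D_K|^{1/2}<q$ equals $q$.

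Part~\ref{izesi} is immediate arithmetic: if $p\ge\exp(100|D_K|^{1/2}\log|D_K|)$ then $\tfrac14\log|D_K|+\tfrac12\log\eta_K\le\tfrac14\log|D_K|+\tfrac12|D_K|^{1/2}\log|D_K|$ by~\eqref{eeta}, which is at most $|D_K|^{1/2}\log|D_K|\le\tfrac1{100}\log p\le 0.01\log p$, so $\height(\theta_p)\le\tfrac12\log p+0.01\log p=0.51\log p$.

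The main point is part~\ref{imind}, which is where I expect the real work. Multiplicative independence follows from part~\ref{idisj}: a relation $\theta_{p_1}^{a_1}\cdots\theta_{p_k}^{a_k}=1$ means $(\theta_{p_1})^{a_1}\cdots(\theta_{p_k})^{a_k}$ is the trivial ideal, and since $\nu_{\gerp_i}(\theta_{p_i})=\pm1$ while $\nu_{\gerp_i}(\theta_{p_j})=0$ for $j\ne i$, reading off the valuation at $\gerp_i$ forces $a_i=0$ for each $i$. For the Kummer-type statement $[K(\sqrt{\theta_{p_1}},\ldots,\sqrt{\theta_{p_k}}):K]=2^k$, by Kummer theory this degree equals $2^k$ iff the classes of $\theta_{p_1},\ldots,\theta_{p_k}$ are $\F_2$-independent in $K^\times/(K^\times)^2$, i.e. no nonempty subproduct $\prod_{i\in I}\theta_{p_i}$ is a square in $K^\times$. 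Suppose $\prod_{i\in I}\theta_{p_i}=\xi^2$ for some nonempty $I$. Taking ideals and evaluating $\nu_{\gerp_i}$ for $i\in I$ gives $\pm1=2\nu_{\gerp_i}(\xi)$, a contradiction since the left side is odd. This is actually cleaner than I feared — the odd valuation at $\gerp_i$ obstructs both the multiplicative relation and the square. So I would present part~\ref{idisj} first (or fold its proof into part~\ref{inone} as above), then derive parts~\ref{inone}, \ref{izesi}, \ref{imind} from it. No serious obstacle remains; the only thing to watch is the bookkeeping of which rational primes can appear in the support of $\gera$, handled by the bound $\norm\gera<|D_K|^{1/2}$.
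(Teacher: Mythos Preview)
Your argument is correct throughout. For items~\ref{inone}--\ref{idisj} you simply spell out what the paper calls ``immediate from the definition of~$\theta_p$'', and your bookkeeping with the ideal decomposition $(\theta_p)=\gerp\gera\gerp^{-\sigma}\gera^{-\sigma}$ is exactly the right way to make this precise. For item~\ref{imind} there is a small difference in packaging: the paper argues inductively by ramification, setting $L_i=K(\sqrt{\theta_{p_1}},\ldots,\sqrt{\theta_{p_i}})$ and observing that~$\gerp_i$ ramifies in~$L_i$ (because $\nu_{\gerp_i}(\theta_{p_i})$ is odd) but not in~$L_{i-1}$ (because $\theta_{p_1},\ldots,\theta_{p_{i-1}}$ are $\gerp_i$-units), forcing $[L_i:L_{i-1}]=2$; you instead invoke Kummer theory directly and rule out a square relation by the parity of $\nu_{\gerp_i}$. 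Both routes rest on the same odd-valuation fact from item~\ref{idisj}, and yours is if anything the more transparent phrasing; the ramification language buys nothing additional here.
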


\begin{proof}
Items~\ref{inone} and~\ref{idisj} follow from~\eqref{absofalpha}, the definition of~$\theta_p$ and the upper bound  ${\norm \gera< |D_K|^{1/2}}$. To prove item~\ref{izesi}, it suffices to show that
$$
0.01\log p \ge \frac14\log|D_K|+\frac12\log\eta_K. 
$$
In view of~\eqref{eeta}, this would follow from
$$
\log p \ge 100\left(\frac14\log|D_K|+\frac12|D_K|^{1/2}\log |D_K|\right). 
$$ 
And this is a consequence of our assumption about~$p$.

To prove item~\ref{imind}, denote $L_i= K\bigl(\sqrt{\theta_{p_1}}, \ldots, \sqrt{\theta_{p_i}}\bigr)$ (with the convention ${L_0=K}$), and let~$\gerp_i$ be a prime of~$K$ above~$p_i$. Item~\ref{idisj} implies that~$\gerp_i$ ramifies in~$L_i$ but not in~$L_{i-1}$. Hence ${[L_i:L_{i-1}]=2}$, whence the result. 
\end{proof}

Item~\ref{izesi} of this proposition suggests to count the split primes~$p$  satisfying ${p\ge \exp(100|D_K|^{1/2}\log|D_K|)}$. The following is an immediate consequence of Proposition~\ref{pcountsplit}.

\begin{corollary}
\label{ccountbigsplit}
For ${x\ge \exp(\max\{10^7, |D_K|\})}$
we have
\begin{equation*}
\pi_s(x,K)-\pi_s\bigl(\exp(100|D_K|^{1/2}\log|D_K|),K\bigr) \ge 0.49\frac{x}{\log x}. 
\end{equation*}
\end{corollary}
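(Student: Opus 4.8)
\textbf{Proof proposal for Corollary~\ref{ccountbigsplit}.}
The plan is to apply Proposition~\ref{pcountsplit} twice: once at the value~$x$, and once at the threshold value ${y:=\exp(100|D_K|^{1/2}\log|D_K|)}$, and then subtract. First I would observe that both~$x$ and~$y$ satisfy the hypothesis~\eqref{exge} of Proposition~\ref{pcountsplit}: for~$y$ this is immediate from ${|D_K|\ge 3}$, since ${100|D_K|^{1/2}\log|D_K|\ge |D_K|}$ and ${100|D_K|^{1/2}\log|D_K|\ge \log 10^{10}}$; for~$x$ it follows from the assumption ${x\ge\exp(\max\{10^7,|D_K|\})}$, which is stronger than~\eqref{exge}. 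Applying~\eqref{ephkast} at~$x$ and using the trivial bound ${\pi_s(y,K)\le\pi(y)\le 1.3\,y/\log y}$ from~\eqref{erspi} at the threshold, I would get
\begin{equation*}
\pi_s(x,K)-\pi_s(y,K) \ge \frac12\frac{x}{\log x}-\frac{\ph(|D_K|)}{320}\frac{x}{(\log x)^2}-1.3\frac{y}{\log y}.
\end{equation*}

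Next I would show that the two subtracted terms together are bounded by ${0.01\,x/\log x}$, which gives the claimed ${0.49\,x/\log x}$. For the middle term, using the trivial ${\ph(|D_K|)\le|D_K|\le\log x}$ (valid since ${x\ge e^{|D_K|}}$), one gets ${\ph(|D_K|)/(320\log x)\le 1/320<0.004}$, so this term is at most ${0.004\,x/\log x}$. For the tail term~$1.3\,y/\log y$, the point is that~$y$ is fixed while~$x$ is large: since ${x\ge\exp(\max\{10^7,|D_K|\})}$ and ${\log y=100|D_K|^{1/2}\log|D_K|}$, a crude estimate comparing ${\log(1.3\,y/\log y)}$ against ${\log(0.006\,x/\log x)}$ suffices. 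Explicitly, ${1.3\,y/\log y\le y\le\exp(100|D_K|^{1/2}\log|D_K|)}$, and I would check that ${100|D_K|^{1/2}\log|D_K|\le\frac12\log x}$ whenever ${\log x\ge\max\{10^7,|D_K|\}}$ (this holds because, writing ${t=|D_K|\ge 3}$, one needs ${200\sqrt t\log t\le\max\{10^7,t\}}$, which is true for ${3\le t\le 10^7}$ by direct estimation of the function ${200\sqrt t\log t}$, and for ${t>10^7}$ because ${200\sqrt t\log t<t}$ there); hence ${y\le x^{1/2}}$ and so ${1.3\,y/\log y\le x^{1/2}\le 0.006\,x/\log x}$ once~$x$ is large, say ${x\ge e^{10^7}}$. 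Combining, ${\pi_s(x,K)-\pi_s(y,K)\ge(0.5-0.004-0.006)x/\log x=0.49\,x/\log x}$.

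The only genuinely delicate point is verifying the elementary inequality ${200|D_K|^{1/2}\log|D_K|\le\max\{10^7,|D_K|\}}$, i.e.\ that the threshold~$\log y$ really is negligible compared to~$\log x$; everything else is bookkeeping with the three explicit estimates~\eqref{erome}--\eqref{erspi} and Proposition~\ref{pcountsplit}. I expect this to be the main (though still routine) obstacle, since one must separate the regime ${|D_K|\le 10^7}$, where the constant~$10^7$ dominates and one checks a single-variable inequality on a bounded range, from the regime ${|D_K|>10^7}$, where ${200\sqrt t\log t<t}$ trivially. If the numerics are slightly tight one can always enlarge the threshold constant~$10^7$ in the hypothesis; the statement only claims it for ${x\ge\exp(\max\{10^7,|D_K|\})}$, so there is room to spare.
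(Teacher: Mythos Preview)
Your approach is correct and is exactly what the paper has in mind (the paper simply calls the corollary an ``immediate consequence of Proposition~\ref{pcountsplit}'' and gives no further details). Two small numerical slips, neither fatal. First, despite your opening sentence you never actually apply Proposition~\ref{pcountsplit} at~$y$, only the trivial bound~\eqref{erspi}; this is just as well, because the check that~$y$ satisfies~\eqref{exge} fails for large discriminants (the inequality ${100|D_K|^{1/2}\log|D_K|\ge|D_K|}$ is false once~$|D_K|$ is around~$5\cdot10^6$). Simply drop that unused verification. Second, the inequality ${200\sqrt t\log t\le\max\{10^7,t\}}$ is not quite true near ${t=10^7}$: at ${t=10^7}$ the left side is about ${1.02\times10^7}$, and for~$t$ just above~$10^7$ one has ${200\log t>\sqrt t}$. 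So you only get ${\log y\le 0.51\log x}$, i.e.\ ${y\le x^{0.51}}$ rather than ${y\le x^{1/2}}$; but this is still vastly more than enough to conclude ${1.3\,y/\log y\le 0.006\,x/\log x}$ when ${x\ge e^{10^7}}$, exactly as you anticipated in your final remark.
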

\begin{proof}
Using 
Proposition \ref{pcountsplit},
\begin{align}
\pi_s(x,K)-&\pi_s\bigl(\exp(100|D_K|^{1/2}\log|D_K|),K\bigr)\nonumber\\
\label{exlogx}
&\ge   \frac{1}{2}\frac{x}{\log x}-\frac{\ph(|D_K|)}{320}\frac{x}{(\log x)^2}-
\exp(100|D_K|^{1/2}\log|D_K|).  
\end{align}
Let us estimate both the extra terms in the right-hand side of~\eqref{exlogx}. We have ${\log x \ge |D_K|}$, which implies that
$$
\frac{\ph(|D_K|)}{320}\frac{x}{(\log x)^2} \le \frac{1}{320}\frac{x}{\log x}. 
$$
Next, using ${\log x \ge |D_K|}$ and ${\log x\ge 10^7}$, we obtain 
$$
100|D_K|^{1/2}\log|D_K|\le 100(\log x)^{1/2}\log\log x <0.6\log x.
$$
Hence 
$$
\exp(100|D_K|^{1/2}\log|D_K|) < x^{0.6} <10^{-10} \frac x{\log x},
$$
where we again use the assumption ${\log x\ge 10^7}$. 

We conclude that the right-hand side of~\eqref{exlogx} exceeds 
$$
\left(\frac12-\frac{1}{320}-10^{-10}\right)\frac x{\log x}, 
$$
which is bigger than $0.49x/\log x$. 
\end{proof}


\section{Proof of Theorem~\ref{thordquad}}
\label{sordquad}
As in Section~\ref{sordrat}, we start from a simple lemma. 

\begin{lemma}
\label{lkumm}
Let~$K$ be a field of characteristic~$0$, let ${\gamma_1, \ldots, \gamma_k\in K^\times}$ be such that 
$$
\bigl[K\bigl(\sqrt{\gamma_1}, \ldots,\sqrt{\gamma_k}\bigr):K\bigr]=2^k, 
$$
and let ${\gamma\in K^\times}$ be not a square in~$K$. Then, after suitable renumbering ${\gamma_1, \ldots, \gamma_k}$, we have 
\begin{equation}
\label{ekumm}
\bigl[K\bigl(\sqrt{\gamma},\sqrt{\gamma_2}, \ldots,\sqrt{\gamma_k}\bigr):K\bigr]=2^k.  
\end{equation}

\end{lemma}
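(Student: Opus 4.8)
The plan is to work with the $\F_2$-vector space structure underlying Kummer theory for exponent $2$. Since $K$ has characteristic $0$ (in particular it contains $\sqrt{-1}$ is irrelevant here; we only need $2\ne 0$), the group $K^\times/(K^\times)^2$ is a vector space over $\F_2$, and by Kummer theory the degree $[K(\sqrt{a_1},\dots,\sqrt{a_m}):K]$ equals $2^r$, where $r$ is the dimension of the $\F_2$-span of the classes of $a_1,\dots,a_m$ in $K^\times/(K^\times)^2$. Thus the hypothesis $[K(\sqrt{\gamma_1},\dots,\sqrt{\gamma_k}):K]=2^k$ says precisely that the images $\bar\gamma_1,\dots,\bar\gamma_k$ are $\F_2$-linearly independent in $V:=K^\times/(K^\times)^2$, and the hypothesis that $\gamma$ is not a square says $\bar\gamma\ne 0$ in $V$.

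First I would restate the problem in this linear-algebra language: we have a linearly independent set $\{\bar\gamma_1,\dots,\bar\gamma_k\}$ in the $\F_2$-vector space $V$, spanning a $k$-dimensional subspace $W$, together with a nonzero vector $\bar\gamma\in V$; we must show that for some index $i$, replacing $\bar\gamma_i$ by $\bar\gamma$ again yields a linearly independent set. Equivalently, $\{\bar\gamma,\bar\gamma_2,\dots,\bar\gamma_k\}$ (after renumbering so that the discarded vector is $\bar\gamma_1$) is linearly independent, i.e.\ spans a $k$-dimensional space containing $\bar\gamma$.

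The key step is the Steinitz exchange observation: write $\bar\gamma = \sum_{i\in I} \bar\gamma_i$ for some subset $I\subseteq\{1,\dots,k\}$ if $\bar\gamma\in W$, and $I$ is nonempty because $\bar\gamma\ne 0$; pick any $i_0\in I$. Then $\bar\gamma_{i_0} = \bar\gamma + \sum_{i\in I\setminus\{i_0\}}\bar\gamma_i$ lies in the span of $\bar\gamma$ together with $\{\bar\gamma_i : i\ne i_0\}$, so that span equals $W$, which is $k$-dimensional; hence $\{\bar\gamma,\bar\gamma_i : i\ne i_0\}$ is a basis of $W$, in particular linearly independent. Renumber so that $i_0=1$. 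If instead $\bar\gamma\notin W$, then $\{\bar\gamma,\bar\gamma_1,\dots,\bar\gamma_k\}$ is already independent, so a fortiori $\{\bar\gamma,\bar\gamma_2,\dots,\bar\gamma_k\}$ is independent (after discarding $\bar\gamma_1$, or indeed any $\bar\gamma_i$). Translating back via Kummer theory gives~\eqref{ekumm}.

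I do not anticipate a real obstacle here; the only thing to be careful about is making the Kummer-theory translation precise (stating that $[K(\sqrt{a_1},\dots,\sqrt{a_m}):K]=2^m$ iff the classes of the $a_i$ are independent in $K^\times/(K^\times)^2$), since this is the one external input, and noting that "not a square in $K$" is exactly "$\bar\gamma\ne 0$". Everything else is the elementary exchange lemma for vector spaces over $\F_2$, so the write-up will be short.
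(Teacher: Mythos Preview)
Your proposal is correct and follows essentially the same route as the paper: pass to the $\F_2$-vector space $K^\times/(K^\times)^2$, note via Kummer theory that the hypotheses become ``$\bar\gamma_1,\ldots,\bar\gamma_k$ independent'' and ``$\bar\gamma\ne 0$'', and then invoke the Steinitz exchange lemma to replace some $\bar\gamma_i$ by $\bar\gamma$. The paper's proof is just a terser version of exactly this argument.
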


\begin{proof}
Let ${\bar\gamma, \bar\gamma_1, \ldots, \bar\gamma_k}$ be the images of ${\gamma,\gamma_1, \ldots, \gamma_k}$ in the group ${K^\times/(K^\times)^2}$. Viewing the latter as an $\F_2$-vector space, the vectors ${\bar\gamma_1, \ldots, \bar\gamma_k}$ are linearly independent and vector~$\bar\gamma$ is non-zero. Hence, after renumbering, vectors ${\bar\gamma, \bar\gamma_2, \ldots, \bar\gamma_k}$ become linearly independent. 
This yields~\eqref{ekumm} by Kummer's theory, as given, for instance, in \cite[Section~VI.8]{La02}. Indeed, Theorem~8.1 therein
implies that 
$$
[K\bigl(\sqrt{\gamma},\sqrt{\gamma_2}, \ldots,\sqrt{\gamma_k}\bigr):K\bigr]=[\Gamma:(K^\times)^2],  
$$
where~$\Gamma$ is 
the subgroup of~$K^\times$ generated by ${\gamma, \gamma_2, \ldots, \gamma_k}$ and $(K^\times)^2$. 
The quotient  ${\Gamma/(K^\times)^2}$ is isomorphic, as $\F_2$-vector space, to the space generated by ${\bar\gamma, \bar\gamma_2, \ldots, \bar\gamma_k}$. Hence 
${[\Gamma:(K^\times)^2]=2^k}$, and we are done. 
\end{proof}

Now we are ready to start the proof of Theorem~\ref{thordquad}. 
In this section, ${K=\Q(\gamma)}$ and~$\gerp$ is a prime of~$K$, whose underlying rational prime~$p$ satisfies 
\begin{equation}
\label{eassumpag}
p\ge p_0=\exp\exp\bigl(\max\{10^8, 2|D_K|\}\bigr).
\end{equation}
In particular, 
\begin{equation}
\label{epge5bis}
p\ge 5,
\end{equation}
which is required to apply Theorem~\ref{thyu}.

If ${\norm\gerp=p}$ then Theorem~\ref{thordquad} follows from the case ${d=2}$ of Theorem~\ref{thordrat}. Therefore we will assume that ${\norm\gerp=p^2}$. In particular, the residue field $\F_\gerp$ is the finite field ${\F_{p^2}}$.

Let~$x$ be a positive real number to be specified later to satisfy
\begin{equation}
\label{exlower}
x\ge \exp(\max\{10^7, |D_K|\}). 
\end{equation} 
The results of Section~\ref{smult}  imply the following. There exists a positive integer~$k$ and distinct prime numbers ${\ell_1, \ldots, \ell_k \in S(K)}$ such that 
\begin{align}
&k\ge 0.49\frac{x}{\log x}, \qquad \ell_i \le x, \nonumber\\ 
\label{ehethetali}
&\height(\theta_{\ell_i}) \le 0.51\log x \qquad (1\le i \le k). 
\end{align}
Note also that 
\begin{equation}
\label{ekupper}
k \le \pi(x) \le 1.3\frac{x}{\log x}, 
\end{equation}
see~\eqref{erspi}.

Next, let~$r$ be the biggest positive integer with the following property: there exists ${\theta\in K^\times}$ such that ${\theta^r=\gamma}$. This~$\theta$ is not a square in~$K$ by the definition of $r$, and Lemma~\ref{lkumm} implies that, after renumbering ${\ell_1, \ldots, \ell_k}$, we have 
$$
\bigl[K(\sqrt\theta, \sqrt{\theta_{\ell_2}}, \ldots, \sqrt{\theta_{\ell_k}}):K\bigr]=2^k. 
$$

Denote by~$G$ the subgroup of the multiplicative group ${\F_\gerp^\times=\F_{p^2}^\times}$, consisting of elements of norm $\pm1$:
$$
G=\{x \in \F_{p^2}^\times: \norm_{\F_{p^2}/\F_p}x=\pm1\}. 
$$ 
Since the norm map $\norm: \F_{p^2}^\times\to \F_p^\times$ is surjective (see, for instance, \cite[Theorem~2.28(ii)]{LN97}),  we have ${[\F_{p^2}^\times:G]=(p-1)/2}$.  
The $\F_\gerp$-images of ${\theta, \theta_{\ell_2}, \ldots, \theta_{\ell_k}}$ belong to~$G$. Hence we can use Theorem~\ref{thyu} with 
\begin{align*}
&\alpha_1=\frac{\theta}{\theta_{\ell_2}\cdots\theta_{\ell_k}}; \qquad \alpha_i=\theta_{\ell_i}\quad (i=2,\ldots,k);\\ 
& b_i=nr \quad (i=1,\ldots, k); \qquad \delta=\frac{p-1}{2}; \qquad d=2 .
\end{align*}
Note that ${\alpha_1, \ldots, \alpha_k}$ are $\gerp$-adic units, as required in Theorem~\ref{thyu}. Indeed, item~\ref{idisj} of Proposition~\ref{pthetas} implies that each~$\alpha_i$ is disjoint from any rational prime exceeding $|D_K|^{1/2}$, except perhaps ${\ell_1, \ldots, \ell_k}$. We have ${p\ne \ell_1, \ldots, \ell_k}$ (because~$p$ is inert in~$K$, and the primes $\ell_i$ split in~$K$), and ${p\ge |D_K|^{1/2}}$ by~\eqref{eassumpag}. Hence~$\alpha_i$ is disjoint from~$p$, that is, it is a $\gerp$-adic unit.

Using the upper bound~\eqref{ehethetali} for the heights of $\theta_{\ell_i}$'s, we obtain 
\begin{align}
\nu_\gerp(\gamma^n-1) &=\nu_\gerp\bigl(\alpha_1^{nr}\theta_{\ell_2}^{nr}\cdots \theta_{\ell_k}^{nr}-1\bigr) \nonumber \\
\label{efirst}
&\le 
10^6\cdot 60^{k}k^{5/2}(\logast k )\height(\alpha_1)(0.51\log x)^{k-1}\Omega\logast(nr), 
\end{align}
where 
$$
\Omega = \max\left\{\frac{2p^2}{p-1}\left(\frac{k}{2\log p}\right)^k,2e^k\log p\right\}. 
$$
We will see later that
\begin{equation}
\label{comega}
\Omega=\frac{2p^2}{p-1}\left(\frac{k}{2\log p}\right)^k
\end{equation} 
with our choice of~$x$.  
Using ${p\ge p_0}$, this implies that 
$$
\Omega \le 2.1p \left(\frac{k}{2\log p}\right)^k. 
$$
Next, we have 
${2\height(\theta) \ge\log((1+\sqrt5)/2)}$ 
by Lemma~\ref{lhe}. Using this, the definition of $\alpha_1$ and the upper bound~\eqref{ehethetali} for the height of the $\theta_{\ell_i}$'s, we estimate
\[
\height(\alpha_1)\leq \height(\theta)+0.51(k-1)\log x\leq 5\height(\theta)k\log x= \frac{5}{r}\height(\gamma)k\log x. 
\]
Also, a quick verification shows that 
${\logast (nr)\le r\logast n}$
for all possible choices of~$n$ and~$r$. 
Substituting all these estimates into~\eqref{efirst}, we obtain 
\[
\nu_\gerp(\gamma^n-1)\leq 10^8k^{7/2}(\logast k) p\left(\frac{15.3k\log x}{\log p}\right)^k\height(\gamma)\logast n.
\]
We want to simplify this estimate. It follows from~\eqref{exlower} that 
$$
k\ge  0.49\frac{x}{\log x} \ge \exp(10^6),
$$ 
which easily implies that 
${10^8k^{7/2}(\logast k) <1.1^k}$. 
Also, ${k\log x\leq 1.3x}$ by~\eqref{ekupper}. Since ${1.1\cdot 1.3\cdot 15.3 < 30}$, we obtain the estimate 
\[
\nu_\gerp(\gamma^n-1)\leq p\left(\frac{30x}{\log p}\right)^k\height(\gamma)\logast n .
\]
Now we set $x=300^{-1}\log p$. Then~\eqref{exlower} is satisfied, and we have
\[
\nu_\gerp(\gamma^n-1)\leq p\cdot 0.1^k\height(\gamma)\logast n.
\]
Since 
\[
k\geq 0.49\frac{x}{\log x}
>\frac{ 0.001\log p}{\log\log p}, 
\]
we obtain
\[
\nu_\gerp(\gamma^n-1)\le p\exp\left(-0.002\frac{\log p}{\log\log p}\right)\height(\gamma)\logast n, 
\]
which is even better than wanted. 

It remains to verify that~\eqref{comega} holds with our choice of~$x$. It suffices to prove that ${p>(2e\log p)^{k+1}}$. Using the lower estimates ${p\ge \exp\exp(10^8)}$ and ${x\ge \exp(10^7)}$ (see~\eqref{eassumpag},~\eqref{exlower}) together with the upper estimate~\eqref{ekupper}, we obtain
\[
k+1<\frac{1.4x}{\log x}=\frac{1.4}{300} \frac{\log p}{\log\log p-\log 300} < 0.01\frac{\log p}{\log\log p}
\]
and ${\log(2e\log p)<2\log\log p}$. 
It follows that ${(k+1)\log(2e\log p)<\log p}$. 
This completes the proof of the theorem.


\section{Cyclotomic polynomials and primitive divisors}
\label{scypr}

In this section, we collect some  results on cyclotomic polynomials and primitive divisors. We denote by  $\Phi_n(t)$ the cyclotomic polynomial of order~$n$. Recall that ${\deg\Phi_n=\ph(n)}$, the Euler totient. 

The following results go back to Schinzel~\cite{Sc74}, but in the present form they can be found in~\cite{BL21}. Recall (see Section~\ref{sprelim}) that ${A=O_1(B)}$ means ${|A|\le B}$.

\begin{proposition}
\label{prcycpol}
\begin{enumerate}
\item
\label{ihephin}
Let~$\gamma$ be an algebraic number. Then 
$$
\height(\Phi_n(\gamma)) = \ph(n)\height(\gamma) +O_1(2^{\omega(n)} \log (\pi n)). 
$$
\item
\label{iarch}
Let~$\gamma$ be a complex algebraic number of degree~$d$, non-zero and not a root of unity. Then 
\begin{equation}
\label{elogphi}
\log|\Phi_n(\gamma)| \ge -10^{14}d^5\height(\gamma)\cdot 2^{\omega(n)}\logast n. 
\end{equation}
\end{enumerate}
\end{proposition}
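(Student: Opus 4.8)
The plan is to base both assertions on the Möbius factorization $\Phi_n(t)=\prod_{d\mid n}(t^d-1)^{\mu(n/d)}$, which involves only $2^{\omega(n)}$ non-trivial factors.

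For part~\ref{ihephin} I would apply formula~\eqref{ehe} to $\beta=\Phi_n(\gamma)\in K:=\Q(\gamma)$ (valid with $d=[K:\Q]$ even when $\beta$ generates a proper subfield) and treat finite and infinite places separately. The non-archimedean part comes out \emph{exactly, with no error term}: if $\nu_\gerp(\gamma)\ge 0$ then $\nu_\gerp(\gamma^m-1)\ge 0$ for every $m$, so $\Phi_n(\gamma)$ has no pole at $\gerp$; and if $\nu_\gerp(\gamma)<0$ then $\nu_\gerp(\gamma^m-1)=m\nu_\gerp(\gamma)$, hence $\nu_\gerp(\Phi_n(\gamma))=\nu_\gerp(\gamma)\sum_{d\mid n}\mu(n/d)(n/d)=\ph(n)\nu_\gerp(\gamma)$; thus the non-archimedean part of $\height(\Phi_n(\gamma))$ equals $\ph(n)$ times that of $\height(\gamma)$. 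The entire error is therefore carried by the archimedean places, and it suffices to prove, for each embedding $z=\gamma^\sigma\in\C^\times$ (not a root of unity), the estimate $\bigl|\log^+|\Phi_n(z)|-\ph(n)\log^+|z|\bigr|\le 2^{\omega(n)}\log(\pi n)$; averaging over the $d$ embeddings yields the archimedean part.

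For this estimate I would use $\log|\Phi_n(z)|=\sum_{d\mid n}\mu(n/d)\log|z^d-1|$. When $|z|$ is bounded away from $1$ (say $|z|\ge 3/2$ or $|z|\le 2/3$) each factor satisfies $\log|z^d-1|=d\log^+|z|+O_1(\log 3)$, and summing the $2^{\omega(n)}$ relations against $\sum_{d\mid n}\mu(n/d)d=\ph(n)$ gives the claim immediately. When $|z|$ is close to $1$, $\ph(n)\log^+|z|$ is negligible and I would bound $\log^+|\Phi_n(z)|$ by reducing (via a radial interpolation) to $|z|=1$, where $|\Phi_n(e^{i\theta})|=\prod_{d\mid n}\bigl|2\sin(d\theta/2)\bigr|^{\mu(n/d)}$: away from the roots of unity the denominator factors are bounded below by a negative power of $n$, and near a root of unity $\zeta$ of order $m\mid n$ one uses that the zeros of the signed product cancel ($\sum_{d:\,m\mid d\mid n}\mu(n/d)=\sum_{d'\mid n/m}\mu((n/m)/d')=0$ for $m<n$) together with the classical evaluation $|\Phi_n(\zeta)|=1$ (if $n/m$ is not a prime power) or $p$ (if $n/m=p^k$), so that $\bigl|\log|\Phi_n(\zeta)|\bigr|\le\log n$. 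I expect this near‑the‑unit‑circle bookkeeping, carried out with the precise constant $\log(\pi n)$, to be the main obstacle in part~\ref{ihephin}.

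Part~\ref{iarch} does \emph{not} follow from part~\ref{ihephin} (which would lose a factor comparable to $\ph(n)/\logast n$); it needs a bound logarithmic in $n$. Writing again $\log|\Phi_n(\gamma)|=\sum_{d\mid n}\mu(n/d)\log|\gamma^d-1|$, I would first extract the main term $\log|\gamma^d-1|=d\log^+|\gamma|+r_d$, so that $\log|\Phi_n(\gamma)|=\ph(n)\log^+|\gamma|+\sum_{d\mid n}\mu(n/d)r_d\ge\sum_{d\mid n}\mu(n/d)r_d$. Each residual satisfies $r_d\le\log 2$, while a bound $r_d\ge -C(d)\height(\gamma)\logast n$ is elementary unless $|\gamma|^d$ is close to $1$; in that case $\gamma^d-1=e^{L_d}-1$ with $L_d=d\log\gamma-2\pi i b$, $b$ the nearest integer to $d\,\Im(\log\gamma)/2\pi$ (so $|b|\le d$), whence $|\gamma^d-1|\asymp|L_d|$, and $L_d$ is a non-zero linear form in the two logarithms $\log\gamma$ and $\log(-1)$ with integer coefficients of modulus $\le 2n$; an explicit version of Baker's theorem on linear forms in logarithms (Baker--Wüstholz, Matveev, or the sharp two‑logarithm bounds of Laurent--Mignotte--Nesterenko) then gives $\log|L_d|\ge -C(d)\height(\gamma)\logast n$ with $C(d)$ polynomial in $d$. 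Summing the $2^{\omega(n)}$ residual bounds and absorbing the additive $O(1)$'s via $d\,\height(\gamma)\ge 1/(4(\logast d)^3)$ (inequality~\eqref{edall}) yields $\log|\Phi_n(\gamma)|\ge -10^{14}d^5\height(\gamma)2^{\omega(n)}\logast n$; here the only real work is to track the explicit $d$‑dependence through the chosen linear‑forms estimate so that it fits under $10^{14}d^5$.
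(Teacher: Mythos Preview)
Your plan is sound, but it differs from what the paper actually does: the paper does not prove this proposition from first principles. Item~\ref{ihephin} is simply quoted as \cite[Theorem~3.1]{BL21}, and item~\ref{iarch} is obtained in two lines from \cite[Corollary~3.5]{BL21}, which already gives
\[
\log|\Phi_n(\gamma)| \ge -10^{12}d^3(\height(\gamma)+1)\cdot 2^{\omega(n)}\log(n+1);
\]
the paper then replaces $\log(n+1)$ by $1.3\logast n$ and uses~\eqref{edall} to absorb the additive~$1$ into $\height(\gamma)$ at the cost of an extra factor~$d^2$, arriving at $10^{14}d^5$.

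What you sketch is, in effect, a reconstruction of the cited results from~\cite{BL21}: the Möbius factorization with exact non-archimedean contribution and unit-circle analysis for item~\ref{ihephin}, and a Baker-type lower bound for each $|\gamma^d-1|$ for item~\ref{iarch}. Two small slips to watch: the identity you need is $\sum_{d\mid n}\mu(n/d)\,d=\ph(n)$, not $\sum_{d\mid n}\mu(n/d)(n/d)$; and for the non-archimedean part to come out \emph{exactly} $\ph(n)$ times that of~$\gamma$ you should use the height formula with $\log^+$ and $\max\{0,-\nu_\gerp(\alpha)\}$ rather than~\eqref{ehe} (with~\eqref{ehe}, primes where $\gamma$ is integral but $\Phi_n(\gamma)$ has positive valuation would contribute). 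Neither affects the validity of your outline.
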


\begin{proof}
Item~\ref{ihephin} is \cite[Theorem~3.1]{BL21}. Item~\ref{iarch} follows from   \cite[Corollary~3.5]{BL21}, which gives the inequality 
$$
\log|\Phi_n(\gamma)| \ge -10^{12}d^3(\height(\gamma)+1)\cdot 2^{\omega(n)}\log(n+1). 
$$
We have clearly ${\log(n+1) \le 1.3\logast n}$. Also,   Lemma~\ref{lhe} implies that 
$$
d(\height(\gamma)+1) \le d\height(\gamma) (1+4d(\logast d )^3) <10d^3\height(\gamma). 
$$
This proves~\eqref{elogphi}.  
\end{proof}

Let~$K$ be a number field of degree~$d$ and ${\gamma\in K^\times}$ not a root of unity. We consider the sequence ${u_n=\gamma^n-1}$. We call a $K$-prime~$\gerp$ \textit{primitive divisor} of $u_n$ if 
$$
\nu_\gerp(u_n)\ge 1, \qquad \nu_\gerp(u_k)= 0 \quad (k=1,\ldots n-1). 
$$
Let us recall some basic properties of primitive divisors. 

\begin{proposition}
\label{pprim}
\begin{enumerate}
\item
\label{ipd}
Let~$\gerp$ be a primitive divisor of $u_n$. Then  ${\nu_\gerp(\Phi_n(\gamma) )\ge 1}$ and ${\norm\gerp\equiv1\bmod n}$; in particular, ${\norm\gerp \ge n+1}$.

\item
\label{idegtwo}
Let~$\gerp$ be a primitive divisor of $u_n$ and~$p$ the rational prime underlying~$\gerp$. If~$\gamma$  is of degree~$2$ and absolute norm~$1$, then  ${p\equiv\pm1\bmod n}$. 

\item
\label{inpd}
Assume that ${n\ge 2^{d+1}}$. Let~$\gerp$ be not a primitive divisor of~$u_n$. Then ${\nu_\gerp(\Phi_n(\gamma))\le \nu_\gerp(n)}$.
\end{enumerate}
\end{proposition}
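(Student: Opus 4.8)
The plan is to treat the three items separately: the first two are elementary, and for the third I would cite Schinzel's lemma.

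For item~\ref{ipd} I would first observe that $\nu_\gerp(\gamma)=0$, since $\nu_\gerp(\gamma)>0$ would make $\gamma^n-1$ a $\gerp$-adic unit while $\nu_\gerp(\gamma)<0$ would give $\nu_\gerp(\gamma^n-1)=n\nu_\gerp(\gamma)<0$, both contradicting $\nu_\gerp(u_n)\ge1$. Thus $\gamma$ reduces to an element $\bar\gamma\in\F_\gerp^\times$, and the two conditions defining a primitive divisor say exactly that $\bar\gamma$ has multiplicative order~$n$; Lagrange's theorem gives $n\mid\#\F_\gerp^\times=\norm\gerp-1$, hence $\norm\gerp\ge n+1$. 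For the cyclotomic assertion I would use $\gamma^n-1=\prod_{d\mid n}\Phi_d(\gamma)$: each $\Phi_d(\gamma)$ is a $\gerp$-adic integer, so $\sum_{d\mid n}\nu_\gerp(\Phi_d(\gamma))=\nu_\gerp(u_n)\ge1$ forces $\nu_\gerp(\Phi_d(\gamma))\ge1$ for some $d\mid n$; if $d<n$ then $\Phi_d(t)\mid t^d-1$ in $\Z[t]$ would yield $\nu_\gerp(\gamma^d-1)\ge1$, contradicting primitivity, so $d=n$.

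For item~\ref{idegtwo}, let~$p$ be the rational prime below~$\gerp$. If~$p$ splits or ramifies in $K=\Q(\gamma)$, then $\norm\gerp=p$ and item~\ref{ipd} already gives $p\equiv1\bmod n$. If~$p$ is inert, then $\gerp$ is the only $K$-prime above~$p$, so $\sigma(\gerp)=\gerp$, and $\sigma$ is the Frobenius at~$\gerp$; since $\F_\gerp=\F_{p^2}$ it acts on the residue field as $x\mapsto x^p$. Reducing $\norm\gamma=\gamma\gamma^\sigma=1$ modulo~$\gerp$ therefore gives $\bar\gamma^{\,p+1}=1$, and since $\bar\gamma$ has order~$n$ (from the argument for item~\ref{ipd}) we get $n\mid p+1$, i.e.\ $p\equiv-1\bmod n$. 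In all cases $p\equiv\pm1\bmod n$.

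Item~\ref{inpd} is the substantial one, and I would take it from Schinzel~\cite[Lemma~4]{Sc74} (see also~\cite[Lemma~4.5]{BL21}) rather than reprove it. The idea is that we may assume $\nu_\gerp(\Phi_n(\gamma))\ge1$; then $\bar\gamma$ has some order~$m$ dividing~$n$, with $m<n$ because $\gerp$ is not a primitive divisor. Reducing modulo~$p$ and using the congruence $\Phi_n(t)\equiv\Phi_m(t)^{\ph(n/m)}\bmod p$ shows that $n/m$ is a power of~$p$, and a lifting-the-exponent computation for the valuations $\nu_\gerp(\gamma^d-1)$, $d\mid n$, then yields $\nu_\gerp(\Phi_n(\gamma))=\nu_\gerp(p)\le\nu_\gerp(n)$. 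The hypothesis $n\ge2^{d+1}$ is precisely what makes the delicate case $p=2$ of this computation go through; this is the only genuine obstacle in the proposition, and since it is exactly Schinzel's lemma I would simply quote it.
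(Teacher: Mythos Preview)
Your proposal is correct and matches the paper's approach exactly: the paper does not prove items~\ref{ipd} and~\ref{idegtwo} at all, declaring them ``well-known and easy'', and for item~\ref{inpd} it cites precisely the same two references, \cite[Lemma~4]{Sc74} and \cite[Lemma~4.5]{BL21}. Your detailed arguments for items~\ref{ipd} and~\ref{idegtwo} are sound (in particular the Frobenius argument for the inert case in item~\ref{idegtwo} is the standard one), so you simply supply the missing details that the paper omits.
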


\begin{proof}
Item~\ref{inpd} is Lemma~4 of Schinzel~\cite{Sc74}; see also \cite[Lemma~4.5]{BL21}. Items~\ref{ipd} and~\ref{idegtwo}   are well-known, but we include short proofs for the reader's convenience. 

To prove item~\ref{ipd}, note first of all that we must have ${\nu_\gerp(\gamma) = 0}$, because ${\nu_\gerp(\gamma^n-1)>0}$. Furthermore,
$$
\nu_\gerp(\gamma^n-1)=\sum_{m\mid n}\nu_\gerp(\Phi_m(\gamma)), 
$$
where each summand is non-negative because ${\nu_\gerp(\gamma) = 0}$. 
Since~$\gerp$ is a primitive divisor of~$u_n$, we must have ${\nu_\gerp(\Phi_m(\gamma))=0}$ for every ${m<n}$. It follows that  ${\nu_\gerp(\Phi_n(\gamma)=\nu_\gerp(\gamma^n-1)\ge 1}$. 

Let~$\bar\gamma$ be the image of~$\gamma$ in ${\F_\gerp=\calO_K/\gerp}$. Then saying that~$\gerp$ is a primitive divisor of ${\gamma^n-1}$ is equivalent to saying that~$n$ is the order of~$\bar\gamma$ in the multiplicative group $\F_\gerp^\times$. In particular,~$n$ must divide ${\norm\gerp-1}$, the order of this group. This complete the proof of item~\ref{ipd}.

In item~\ref{idegtwo}, if  ${\norm\gerp=p}$ then the result follows from item~\ref{ipd} (and we do not need the assumption ${\norm\gamma=1}$).  Now assume that ${\norm\gerp=p^2}$. The subgroup 
$$
\{x \in \F_{\gerp}^\times: \norm_{\F_{\gerp}/\F_p}x=1\}
$$
is of order ${p+1}$, because the norm map is surjective \cite[Theorem~2.28(ii)]{LN97}. Since~$\bar\gamma$ belongs to this subgroup, we must have ${n\mid (p+1)}$. This proves item~\ref{idegtwo}.  
\end{proof}

\section{Proof of Theorem~\ref{thstrat}}
\label{sthrat}

We set ${n_0=\exp(10^6)}$ and we assume that ${n\ge n_0}$ in the sequel.

Let~$P$ be the biggest prime number~$p$ with the property ${\nu_p(\Phi_n(\gamma))\ge 1}$. 
We want to show that 
\begin{equation}
\label{eplow}
P\ge n\exp\left(0.0005\frac{\log n}{\log\log n}\right).
\end{equation}
We will deduce this from Theorem~\ref{thordrat}, used  with ${d=1}$, and the properties of cyclotomic polynomials and primitive divisors collected in Section~\ref{scypr}.

We apply equation~\eqref{ehe} with ${\alpha=\Phi_n(\gamma)}$. Here ${d=1}$, and we obtain the following:
\begin{equation}
\label{ehephi}
\height\bigl(\Phi_n(\gamma)\bigr)= -\log^-|\Phi_n(\gamma)|+ \sum_{p}\max\bigl\{0, \nu_p\bigl(\Phi_n(\gamma)\bigr)\bigr\}\log p.
\end{equation}
We estimate the first term in~\eqref{ehephi} using item~\ref{iarch} of Proposition~\ref{prcycpol}:
\begin{equation}
\label{elogphin}
 -\log^-|\Phi_n(\gamma)|\le 10^{14}\height(\gamma)\cdot 2^{\omega(n)}\log n.
\end{equation}
Next, let us  call~$p$ \textit{primitive}  if it is a primitive divisor of ${\gamma^n-1}$, as defined in Section~\ref{scypr}, and \textit{non-primitive} otherwise. We split the  sum in~\eqref{ehephi} into two sums:
$$
\sum_{p}\max\bigl\{0, \nu_p\bigl(\Phi_n(\gamma)\bigr)\bigr\}\log p=\sum_{\genfrac{}{}{0pt}{}{\text{$p$ primi-}}{\text{tive}}}+\sum_{\genfrac{}{}{0pt}{}{\text{$p$ non-}}{\text{primitive}}}
=\Sigma_{\text{p}}+\Sigma_{\text{np}}. 
$$
We estimate $\Sigma_{\text{np}}$ using item~\ref{inpd} of Proposition~\ref{pprim}: 
\begin{equation*}
\Sigma_{\text{np}}\le \sum_p\nu_p(n)\log p = \log n. 
\end{equation*}
Thus, 
$$
\height\bigl(\Phi_n(\gamma)\bigr)\le  10^{14}\height(\gamma)\cdot 2^{\omega(n)}\log n+\log n+\Sigma_{\text{p}}. 
$$
On the other hand, item~\ref{ihephin} of Proposition~\ref{prcycpol} implies the lower bound 
$$
\height\bigl(\Phi_n(\gamma)\bigr) \ge \ph(n)\height(\gamma) -2^{\omega(n)} \log (\pi n). 
$$
Combining the two bounds, we obtain the following lower estimate for $\Sigma_{\text{p}}$: 
\begin{equation}
\label{elspprem}
\Sigma_{\text{p}}  \ge \ph(n)\height(\gamma) -2^{\omega(n)} \log (\pi n) - 10^{14}\height(\gamma)\cdot 2^{\omega(n)}\log n-\log n .  
\end{equation}
Inequalities~\eqref{erome},~\eqref{ersph} and our assumption ${n\ge \exp(10^6)}$  imply that   
the right-hand side of~\eqref{elspprem} is bounded from below by $0.9\ph(n)\height(\gamma)$. Thus, we obtain the lower estimate 
\begin{equation}
\label{elsf}
\Sigma_{\text{p}}\ge 0.9\ph(n)\height(\gamma). 
\end{equation}

Now let us  bound~$\Sigma_{\text{p}}$ from above.  Recall  that primitive~$p$ satisfy ${p\equiv 1\bmod n}$. In particular, 
${p\ge n+1>n_0}$. Since our~$n_0$ is bigger than  the $p_0$ from Theorem~\ref{thordrat}, the latter applies, and we obtain, for primitive~$p$, the estimate
\begin{align*}
\nu_p\bigl(\Phi_n(\gamma)\bigr)=\nu_p(\gamma^n-1)\le p\exp\left(-0.002\frac{\log p}{\log\log p}\right)\height(\gamma)\log n.
\end{align*}
Since ${p>n>e^e}$, we have 
$$
\frac{\log p}{\log\log p} \ge \frac{\log n}{\log\log n}. 
$$
Hence
$$
\nu_p\bigl(\Phi_n(\gamma)\bigr)\le P\exp\left(-0.002\frac{\log n}{\log\log n}\right)\height(\gamma)\log n.  
$$
It follows that 
\begin{align*}
\Sigma_{\text{p}} &\le \sum_{\genfrac{}{}{0pt}{}{p\equiv 1 \bmod n}{p\le P}} \max\bigl\{0, \nu_p\bigl(\Phi_n(\gamma)\bigr)\bigr\}\log p \\
&\le \pi(P; n, 1) \height(\gamma)P\exp\left(-0.002\frac{\log n}{\log\log n}\right)\log n\log P ,
\end{align*}
where, as usual, $\pi(x;m,a)$ counts primes ${p\le x}$ satisfying ${p\equiv a \bmod m}$. 
To estimate $\pi(P; n, 1)$, Stewart uses the Brun-Titchmarsh inequality. However, just the trivial estimate ${\pi(P; n, 1) \le P/n}$ would suffice. We obtain 
\begin{equation}
\label{eusf}
\Sigma_{\text{p}} \le 2\height(\gamma)P^2\log P\exp\left(-0.002\frac{\log n}{\log\log n}\right) \frac{\log n}{n}. 
\end{equation}

Thus, we have a lower bound~\eqref{elsf} and an upper bound~\eqref{eusf} for $\Sigma_{\text{p}}$. Combining the two, we obtain 
$$
P^2\log P \ge 0.4\frac{n\ph(n)}{\log n} \exp\left(0.002\frac{\log n}{\log\log n}\right).
$$
We may assume that ${P<n^2}$, since otherwise there is nothing to prove. Using this assumption and~\eqref{ersph}, we obtain \begin{align*}
2P^2\log n &\ge P^2\log P \\
&\ge 0.4\frac{n\ph(n)}{\log n} \exp\left(0.002\frac{\log n}{\log\log n}\right)\\
&\ge 0.2\frac{n^2}{\log n\log\log n}\exp \left(
0.002\frac{\log n}{\log\log n}
\right).
\end{align*}
This can be re-written as  \[
P\ge \sqrt{0.1}\frac{n}{\log n\sqrt{\log\log n}}\exp \left(
0.001\frac{\log n}{\log\log n}\right). 
\]
Since ${n\ge \exp(10^6)}$, we must have $$ \sqrt{0.1}\frac{n}{\log n\sqrt{\log\log n}}\exp \left(
0.001\frac{\log n}{\log\log n}\right)\ge n\exp \left(
0.0005\frac{\log n}{\log\log n}
\right).$$
Hence~\eqref{eplow} is proved.

\begin{remark}
\label{recannot}
As it is already indicated in the introduction, Theorem~\ref{thstrat} holds not only for ${\gamma\in \Q}$, but for arbitrary algebraic~$\gamma$, and one may wonder whether Theorem~\ref{thstrat} can be extended to this generality, like: for~$n$ large enough, there exists a prime~$\gerp$ of the number field $\Q(\gamma)$ such that 
$$
\nu_\gerp(\gamma^n-1)\ge 1, \qquad 
\norm\gerp \ge n\exp\left(c\frac{\log n}{\log\log n}\right), 
$$
where~$c$ is a positive number not depending on~$n$. 

Unfortunately, the present argument does not seem to be capable of proving this. The reason is that, when ${\gamma\notin\Q}$, there is no good bound for the number of~$\gerp$ satisfying ${\norm\gerp\equiv 1\bmod n}$. For instance, if~$\gamma$ is of degree~$2$, we have to count rational primes satisfying ${p^2\equiv 1\bmod n}$. Since the ring $\Z/n\Z$ may have as much as $2^{\omega(n)}$ square roots of unity, we cannot obtain, without involving extra ideas, an upper bound sharper than $2^{\omega(n)}$ for the number of such primes. And, since $\omega(n)$ can be of magnitude as big as $\log n/\log\log n$, this would destroy the tiny gain ${\exp\left(-0.002d^{-1}\frac{\log \norm\gerp}{\log\log \norm\gerp}\right) }$ obtained in Theorem~\ref{thordrat}. 

In the case ${d=2}$ this difficulty is overcome in~\cite{Ho22}, using ideas from the previous article~\cite{BHL21}. However, the case ${d\ge 3}$ remains  open. 
\end{remark}

\section{Proof of Theorem~\ref{thstquad}}
\label{sthquad}

We follow the proof of Theorem~\ref{thstrat} with appropriate modification. In particular, we set
$
{n_0=\exp\exp(\max\{10^9,3|D_K|\})}
$
and we assume that ${n\ge n_0}$ throughout the proof. 

Let $P$ be the biggest element of the set \[\{ p:  \text{$p$ is a rational prime lying below a prime~$\gerp$ of $K$, with $\nu_\gerp(\Phi_n(\gamma))\geq 1$}\}.\]
We want to show that \begin{equation}\label{conclusion1.3}
P\ge n\exp\left(0.0002\frac{\log n}{\log\log n}\right).
\end{equation}

We apply equation~\eqref{ehe} with ${\alpha=\Phi_n(\gamma)}$. Here ${d=2}$, and we obtain 
\begin{equation}
\label{ehephi2}
2\height\bigl(\Phi_n(\gamma)\bigr)=-\log^-|\Phi_n(\gamma)|-\log^-|\Phi_n(\gamma^\sigma)|+ \sum_{\gerp}\max\bigl\{0, \nu_\gerp\bigl(\Phi_n(\gamma)\bigr)\bigr\}\log\norm\gerp,
\end{equation}
where~$\sigma$ is the non-trivial morphism of $\Q(\gamma)/\Q$. 

We use item~\ref{iarch} of Proposition~\ref{prcycpol} to estimate the first term of \eqref{ehephi2}:\begin{equation}
\label{elogphin2}
-\log^-|\Phi_n(\gamma)| -\log^-|\Phi_n(\gamma^\sigma)|\le 2^6\cdot 10^{14}\height(\gamma)\cdot 2^{\omega(n)}\log n.
\end{equation}
We split the sum in \eqref{ehephi2} into two parts:\[
 \sum_{\gerp}\max\bigl\{0, \nu_\gerp\bigl(\Phi_n(\gamma)\bigr)\bigr\}\log\norm\gerp=\sum_{\genfrac{}{}{0pt}{}{\text{$\gerp$ primi-}}{\text{tive}}}+\sum_{\genfrac{}{}{0pt}{}{\text{$\gerp$ non-}}{\text{primitive}}}
=\Sigma_{\text{p}}+\Sigma_{\text{np}}. 
\]
By item~\ref{inpd} of Proposition~\ref{pprim}, we can bound the non-primitive part,\[
\Sigma_{\text{np}}\le \sum_\gerp\nu_\gerp(n)\log \norm\gerp \le 2\log n. 
\]
Thus\begin{equation}\label{hupperb}
\height\bigl(\Phi_n(\gamma)\bigr)\le   10^{16}\height(\gamma)\cdot 2^{\omega(n)}\log n+\Sigma_{\text{p}}/2+\log n.
\end{equation}
On the other hand, by item~\ref{ihephin} of Proposition~\ref{prcycpol},\begin{equation}\label{hlowerb}
\height\bigl(\Phi_n(\gamma)\bigr)\ge\varphi(n)\height(\gamma)-2^{\omega(n)}\log(\pi n)
\end{equation}
Combining \eqref{hupperb} and \eqref{hlowerb}, we have\begin{equation}\label{sumplowerb}
\Sigma_{\text{p}}/2\ge\varphi(n)\height(\gamma)-2^{\omega(n)}\log(\pi n)-10^{16}\height(\gamma)2^{\omega(n)}\log n-\log n
\end{equation}
Inequalities~\eqref{erome},~\eqref{ersph} and our assumption ${n\ge n_0\ge \exp\exp(10^9)}$  imply that   
the right-hand side of~\eqref{sumplowerb} is bounded from below by $0.9\ph(n)\height(\gamma)$. Thus, we obtain the lower estimate 
\begin{equation}
\label{elsf2}
\Sigma_{\text{p}}\ge 1.8\ph(n)\height(\gamma). 
\end{equation}

Now let us  bound~$\Sigma_{\text{p}}$ from above. By item \ref{ipd} of Proposition \ref{pprim}, a primitive divisor $\gerp$ of $\gamma^n-1$ satisfies $\norm\gerp\equiv 1\bmod n$. In paticular $\norm\gerp\ge n+1$ and thus the underlying rational prime $p$ is bigger than $\sqrt{n_0}=\exp(\exp(\max\{10^9, 3|D_K|\})/2)$, which is bigger than the $p_0$ in Theorem~\ref{thordquad}. So we obtain, for primitive $\gerp$ with underlying prime $p$,
\begin{align*}
\nu_\gerp\bigl(\Phi_n(\gamma)\bigr)=\nu_\gerp(\gamma^n-1) \le p\exp\left(-0.001\frac{\log p}{\log\log p}\right)\height(\gamma)\log n.
\end{align*} 
Since ${p^2\ge \norm\gerp >n>e^e}$, we have 
$$
\frac{\log p}{\log\log p} \ge \frac12 \frac{\log n}{\log\log n}. 
$$
Hence
$$
\nu_\gerp\bigl(\Phi_n(\gamma)\bigr)\le \height(\gamma)P\exp\left(-0.0005\frac{\log n}{\log\log n}\right)\log n.  
$$
Using this and items~\ref{ipd},~\ref{idegtwo} of Proposition~\ref{pprim}, we obtain 
\begin{align*}
\Sigma_{\text{p}} &\le \sum_{\genfrac{}{}{0pt}{}{\norm\gerp\equiv 1 \bmod n}{p\le P}} \max\bigl\{0, \nu_\gerp\bigl(\Phi_n(\gamma)\bigr)\bigr\}\log \norm\gerp \\
&\le 2\bigl(\pi(P; n, 1)+\pi(P; n, -1)\bigr) \height(\gamma)P\exp\left(-0.0005\frac{\log n}{\log\log n}\right)\log n\log P .
\end{align*}
As in Section~\ref{sthrat}, we estimate trivially ${\pi(P; n, 1)+\pi(P; n, -1)\le 2P/n}$. 
We obtain
\begin{equation}
\label{eusf2}
\Sigma_{\text{p}} \le 8\height(\gamma) P^2\log P\exp\left(-0.0005\frac{\log n}{\log\log n}\right)\frac{\log n}{n}.
\end{equation}
Combining the lower bound~\eqref{elsf2} the upper bound~\eqref{eusf2}, we obtain 
$$
P^2\log P \ge 0.1\frac{n\ph(n)}{\log n} \exp\left(0.0005\frac{\log n}{\log\log n}\right).
$$
Using again ${n\ge \exp\exp(10^9)}$ we obtain~\eqref{conclusion1.3}, arguing as in the end of the proof of Theorem~\ref{thstrat} in Section~\ref{sordrat}.

\paragraph{Acknowledgments}
Yuri Bilu and Sanoli Gun acknowledge support of the SPARC Project P445
``Arithmetical aspects of the Fourier coefficients of modular forms''. Yuri Bilu was also supported by the ANR project JINVARIANT. Haojie Hong was supported by the China Scholarship Council grant CSC202008310189.

The authors thank Keith Conrad, Florian Luca,  Kevin O'Bryant and Fabien Pazuki  for useful discussions.  We especially thank the anonymous referee for careful reading of the manuscript and many suggestions, that helped us to correct mistakes and improve the presentation.

{\footnotesize

\label{pagbib}
\bibliographystyle{amsplain}
\bibliography{stew_k}

}

\paragraph{Yuri Bilu \& Haojie Hong:} Institut de Mathématiques de Bordeaux, Université de Bordeaux \& CNRS, Talence, France

\paragraph{Sanoli Gun:} The Institute of Mathematical Sciences, 
Taramani,
Chennai, 
Tamil Nadu, India

\end{document}